\providecommand{\algorithmname}{Algorithm}
\DeclareRobustCommand{\lyxsout}[1]{\ifx\\#1\else\sout{#1}\fi}
\numberwithin{equation}{section}
\numberwithin{figure}{section}
\theoremstyle{plain}
\newtheorem{thm}{\protect\theoremname}[section]
  \theoremstyle{plain}
  \newtheorem{lem}[thm]{\protect\lemmaname}
  \theoremstyle{remark}
  \newtheorem*{rem*}{\protect\remarkname}
  \theoremstyle{remark}
  \newtheorem{rem}[thm]{\protect\remarkname}
  \providecommand{\lemmaname}{Lemma}
  \providecommand{\remarkname}{Remark}
\providecommand{\theoremname}{Theorem}
\begin{document}

\title[On computing distributions of products of random variables]{On computing distributions of products of non-negative independent
random variables  }

\author{Gregory Beylkin, Lucas Monz\'{o}n and Ignas Satkauskas }

\address{Department of Applied Mathematics \\
 University of Colorado at Boulder \\
 UCB 526 \\
 Boulder, CO 80309-0526 }
\begin{abstract}
We introduce a new functional representation of probability density
functions (PDFs) of non-negative random variables via a product of
a monomial factor and linear combinations of decaying exponentials
with complex exponents. This approximate representation of PDFs is
obtained for any finite, user-selected accuracy. Using a fast algorithm
involving Hankel matrices, we develop a general numerical method for
computing the PDF of the sums, products, or quotients of any number
of non-negative independent random variables yielding the result in
the same type of functional representation. We present several examples
to demonstrate the accuracy of the approach.
\end{abstract}

\keywords{product of non-negative independent random variables, probability
density function}
\maketitle

\section{Introduction}

Consider two non-negative independent random variables $X$ and $Y$
with probability density functions (PDFs) $f$ and $g$. It is well
known that the PDFs of their sum, $X+Y$, is given by the convolution
\begin{equation}
s\left(t\right)=\int_{0}^{t}f\left(t-y\right)g(y)dy,\label{eq:PDF of the sum}
\end{equation}
the PDF $p$ of their product, $XY$, is given by
\begin{equation}
p\left(t\right)=\int_{0}^{\infty}\int_{0}^{\infty}f\left(x\right)g\left(y\right)\delta\left(xy-t\right)dxdy,\,\,t\ge0,\label{eq:integral-to-compute-pos}
\end{equation}
where $\delta$ is the delta function, or alternatively, as 
\begin{equation}
p\left(t\right)=\int_{0}^{\infty}f\left(x\right)g\left(t/x\right)\frac{1}{x}dx,\label{eq:integral traditional form}
\end{equation}
and the PDF $q$ of their quotient, $X/Y$, is given by
\begin{eqnarray}
q\left(t\right) & = & \int_{0}^{\infty}\int_{0}^{\infty}f\left(x\right)g\left(y\right)\delta\left(\frac{x}{y}-t\right)dxdy\nonumber \\
 & = & \int_{0}^{\infty}f\left(ty\right)g\left(y\right)ydy,\,\,t\ge0.\label{eq:integral to compute quotient}
\end{eqnarray}
In this paper we introduce a new approximate representation of PDFs
of non-negative random variables via a product of a monomial factor
and a linear combination of decaying exponentials with complex exponents.
Importantly, representing PDFs in this form allows us to evaluate
these integrals numerically so that the resulting PDFs have the same
functional representation as the original PDFs and, thus, can be used
in further computations. Essentially, we provide algorithms to represent
the PDF of a non-negative random variable within any user-selected
accuracy via an optimal linear combination of Gamma-like distributions
with a common shape parameter and possibly a complex valued rate parameter
(with a negative real part). By optimal we understand a linear combination
with minimal number of terms for a given accuracy. We note that while
in principle it is possible to use a representation with only a linear
combination of decaying and oscillatory exponentials, introducing
an additional monomial factor to account for a possible rapid change
of the PDF near zero makes the approximation significantly more efficient. 

Among the operations on random variables mentioned above, computing
the PDF of their product is particularly difficult. It is well known
(see e.g.~\cite{SPRING:1979}) that the Mellin transform of $p$
in \eqref{eq:integral traditional form} is equal to the product of
the Mellin transforms of $f$ and $g$ (the function $p$ is the so-called
Mellin convolution of $f$ and $g$). However, numerical implementation
of the Mellin transform has not resulted in a reliable numerical method.
The only universal method currently available for computing the PDF
of the product of two non-negative independent random variables relies
on a Monte-Carlo type approach, where one samples the individual PDFs,
computes their products, and collects enough samples to achieve certain
accuracy in the computation of $p$ in \eqref{eq:integral traditional form}.
However, due to the slow convergence of such methods (typically $1/\sqrt{N}$,
where $N$ is the number of samples) achieving high accuracy is not
feasible. 

Over the years, for particular PDFs $f$ and $g$, there have been
a number of results showing that $p$ may be computed using special
functions or a series expansion \cite{EPSTEI:1948,SPR-THO:1966,LOMNIC:1967,NAD-KOT:2006,C-K-L-C:2012,Z-W-H-T:2012,SHA-KIB:2007}.
While such results are appropriate for specific distributions, they
do not provide a universal method to compute $p$.

Our representation of PDFs of non-negative random variables differs
significantly from the one we developed for random variables on the
real line in \cite{BE-MO-SA:2017}. For real-valued random variables
with smooth PDFs (except possibly at a finite number of points where
they can have integrable singularities), we use the approximation
via multiresolution Gaussian mixtures in \cite{BE-MO-SA:2017}. In
contrast, our new representation is tailored to non-negative random
variables and accounts for the boundary point (that is, zero) near
which PDFs can be rapidly changing. 

Our approach relies on several algorithms to construct, for a given
accuracy, a (near) optimal representation of functions via a linear
combination of exponentials. These algorithms have their mathematical
foundation in the seminal AAK theory for optimal rational approximations
in the infinity norm \cite{AD-AR-KR:1968,AD-AR-KR:1968a,AD-AR-KR:1971}.
This theory relies on properties of infinite Hankel matrices (Hankel
operators) constructed from the functions to be approximated. Practical
algorithms use finite Hankel matrices and their singular value decomposition
as in \cite{KU-AR-BH:1983,HUA-SAR:1988,HUA-SAR:1990,HUA-SAR:1991}
or (a related) con-eigenvalue decomposition as in \cite{BEY-MON:2005,HAU-BEY:2012}.
These algorithms effectively use analysis-based approximations rather
than a straightforward optimization and are well suited for our purposes.

We introduce our representation for PDFs and derive PDFs for sums,
products and quotients of two random variables in this representation
in Section~\ref{sec:Representation-of-distributions}. In Section~\ref{sec:Algorithm-for-computing}
we briefly describe algorithms we use for computing a near optimal
representation of functions via a linear combination of exponentials
as well as a fast algorithm for computing the Singular Value Decomposition
(SVD) of a low rank Hankel matrix. We illustrate our approach by numerical
examples presented in Section~\ref{sec:Examples-of-computing} and,
in Section~\ref{sec:Computing-expectations-of}, we show that expectations
of functions of non-negative random variables are easily evaluated
using our new representation of their PDFs. Finally, we briefly discuss
further work in Section~\ref{sec:Conclusions-and-further}.

\section{Representation of PDFs of sums, products and quotients of non-negative
independent random variables\label{sec:Representation-of-distributions}}

For a user-selected accuracy $\epsilon$, we approximate the PDF $f_{X}$
of a non-negative random variable $X$ as 
\begin{equation}
\left|f_{X}\left(x\right)-f\left(x\right)\right|\le\epsilon,\,\,\,x>0,\label{eq:approximation-error}
\end{equation}
where 
\begin{equation}
f\left(x\right)=x^{\alpha-1}\sum_{m=1}^{M}a_{m}e^{-\xi_{m}x},\,\,\,\mathcal{R}e\left(\xi_{m}\right)>0,\,\,\,\,\alpha>0,\label{eq:form to maintain}
\end{equation}
and $M$ is as small as possible. Given two non-negative independent
random variables $X$ with PDF \eqref{eq:form to maintain} and $Y$
with PDF 
\begin{equation}
g\left(y\right)=y^{\beta-1}\sum_{n=1}^{N}b_{n}e^{-\eta_{n}y},\,\,\,\mathcal{R}e\left(\eta_{m}\right)>0,\,\,\,\,\beta>0,\label{eq:second PDF}
\end{equation}
we demonstrate that the PDFs of their sum $X+Y$, product $XY$ and
quotient $X/Y$ can be represented in the same functional form thus
enabling a numerical calculus of non-negative random variables. We
note that while in some cases it may be possible to avoid using the
explicit factor $x^{\alpha-1}$ in \eqref{eq:form to maintain}, this
factor significantly reduces the number of terms required if $f$
has a rapid change near the origin. 

In this section we derive formulas (in terms of special functions)
for the results of these operations on independent random variables
with PDFs of the form \eqref{eq:form to maintain} and \eqref{eq:second PDF}.
Then, in Section~\ref{sec:Algorithm-for-computing}, we show how
to obtain the approximation of $f_{X}$ (and similarly, of $g_{Y}$)
in our desired functional form \eqref{eq:form to maintain} and how
to convert the results of operations on them back into the same functional
form.

We start by deriving the PDF of the sum $s$ of two non-negative independent
random variables $X$ and $Y$. We show that
\begin{lem}
\label{lem:1}The PDFs in \eqref{eq:PDF of the sum} of the sum of
two non-negative independent random variables $X$ and $Y$, with
PDFs $f$ and $g$ in \eqref{eq:form to maintain} and \eqref{eq:second PDF},
can be written as
\[
s\left(t\right)=t^{\alpha+\beta-1}r\left(t\right),
\]
where 
\begin{equation}
r\left(t\right)=\frac{\Gamma\left(\alpha\right)\Gamma\left(\beta\right)}{\Gamma\left(\alpha+\beta\right)}\sum_{m=1}^{M}a_{m}e^{-\xi_{m}t}\sum_{n=1}^{N}b_{n}\,_{1}F_{1}\left(\beta,\alpha+\beta;\left(\xi_{m}-\eta_{n}\right)t\right),\label{eq:function to approximate for the sum}
\end{equation}
$\Gamma$ is the gamma function and $\,_{1}F_{1}$ is the confluent
hypergeometric function,
\[
\,_{1}F_{1}\left(a,b;z\right)=1+\frac{a}{b}\frac{z}{1!}+\frac{a\left(a+1\right)}{b\left(b+1\right)}\frac{z^{2}}{2!}+\frac{a\left(a+1\right)\left(a+2\right)}{b\left(b+1\right)\left(b+2\right)}\frac{z^{3}}{3!}\dots.
\]
\end{lem}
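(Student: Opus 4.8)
The plan is to substitute the explicit forms \eqref{eq:form to maintain} and \eqref{eq:second PDF} directly into the convolution \eqref{eq:PDF of the sum} and reduce the resulting finite double sum of integrals to an Euler-type integral representation of $\,_{1}F_{1}$. First I would write
\[
s\left(t\right)=\int_{0}^{t}\left(t-y\right)^{\alpha-1}\sum_{m=1}^{M}a_{m}e^{-\xi_{m}\left(t-y\right)}\,y^{\beta-1}\sum_{n=1}^{N}b_{n}e^{-\eta_{n}y}\,dy,
\]
and, since both sums are finite, interchange summation and integration. Combining the exponentials via $e^{-\xi_{m}\left(t-y\right)}e^{-\eta_{n}y}=e^{-\xi_{m}t}e^{\left(\xi_{m}-\eta_{n}\right)y}$ and pulling $e^{-\xi_{m}t}$ outside isolates the single building-block integral
\[
I_{mn}=\int_{0}^{t}\left(t-y\right)^{\alpha-1}y^{\beta-1}e^{\left(\xi_{m}-\eta_{n}\right)y}\,dy.
\]

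Next I would make the scaling substitution $y=tu$ to extract the monomial prefactor, giving $I_{mn}=t^{\alpha+\beta-1}\int_{0}^{1}\left(1-u\right)^{\alpha-1}u^{\beta-1}e^{zu}\,du$ with $z=\left(\xi_{m}-\eta_{n}\right)t$; this already explains the factor $t^{\alpha+\beta-1}$ in the claimed form of $s$. The crux is to identify the remaining $u$-integral with $\,_{1}F_{1}$. I would invoke the Euler integral representation
\[
\,_{1}F_{1}\left(a,b;z\right)=\frac{\Gamma\left(b\right)}{\Gamma\left(a\right)\Gamma\left(b-a\right)}\int_{0}^{1}e^{zu}u^{a-1}\left(1-u\right)^{b-a-1}\,du,
\]
valid for $\mathcal{R}e\left(b\right)>\mathcal{R}e\left(a\right)>0$; matching powers forces $a=\beta$ and $b=\alpha+\beta$, and the admissibility condition $\alpha+\beta>\beta>0$ is exactly guaranteed by the hypotheses $\alpha,\beta>0$. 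This yields $\int_{0}^{1}\left(1-u\right)^{\alpha-1}u^{\beta-1}e^{zu}\,du=\frac{\Gamma\left(\alpha\right)\Gamma\left(\beta\right)}{\Gamma\left(\alpha+\beta\right)}\,_{1}F_{1}\left(\beta,\alpha+\beta;z\right)$, and reassembling the double sum produces precisely $s\left(t\right)=t^{\alpha+\beta-1}r\left(t\right)$ with $r$ as in \eqref{eq:function to approximate for the sum}.

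The one point needing care --- and the only real \emph{obstacle} --- is that the exponent $z=\left(\xi_{m}-\eta_{n}\right)t$ is complex, so I would not rely on the Euler formula as a real-variable identity but justify it for complex $z$. The cleanest route bypasses analytic continuation entirely: expand $e^{zu}=\sum_{k\ge0}\left(zu\right)^{k}/k!$, which converges uniformly on $\left[0,1\right]$, integrate term by term against $u^{\beta-1}\left(1-u\right)^{\alpha-1}$ using the Beta integral $\int_{0}^{1}u^{\beta+k-1}\left(1-u\right)^{\alpha-1}\,du=\Gamma\left(\beta+k\right)\Gamma\left(\alpha\right)/\Gamma\left(\alpha+\beta+k\right)$, and recognize the resulting series as
\[
\frac{\Gamma\left(\alpha\right)\Gamma\left(\beta\right)}{\Gamma\left(\alpha+\beta\right)}\sum_{k\ge0}\frac{\Gamma\left(\beta+k\right)\Gamma\left(\alpha+\beta\right)}{\Gamma\left(\beta\right)\Gamma\left(\alpha+\beta+k\right)}\frac{z^{k}}{k!},
\]
which is $\frac{\Gamma\left(\alpha\right)\Gamma\left(\beta\right)}{\Gamma\left(\alpha+\beta\right)}\,_{1}F_{1}\left(\beta,\alpha+\beta;z\right)$ by the defining series quoted in the statement. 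This makes the identification hold for all complex $z$ simultaneously and completes the proof.
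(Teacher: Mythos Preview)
Your proof is correct and follows essentially the same route as the paper: substitute the exponential-sum representations into the convolution, pull out $e^{-\xi_{m}t}$, and evaluate the remaining integral $\int_{0}^{t}(t-y)^{\alpha-1}y^{\beta-1}e^{(\xi_{m}-\eta_{n})y}\,dy$ as $t^{\alpha+\beta-1}B(\alpha,\beta)\,_{1}F_{1}(\beta,\alpha+\beta;(\xi_{m}-\eta_{n})t)$. The paper simply cites this evaluation as \cite[Formula 3.383.1]{GRA-RYZ:2015}, whereas you derive it via the substitution $y=tu$ and the Euler integral (and, more carefully, via termwise integration of the exponential series to cover complex $z$); this extra self-containment is a mild bonus but not a different argument.
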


\begin{proof}
The result follows using \cite[Formula 3.383.1]{GRA-RYZ:2015},
\begin{eqnarray*}
s\left(t\right) & = & \int_{0}^{t}\left(t-y\right)^{\alpha-1}\left(\sum_{m=1}^{M}a_{m}e^{-\xi_{m}\left(t-y\right)}\right)y^{\beta-1}\left(\sum_{n=1}^{N}b_{n}e^{-\eta_{n}y}\right)dy\\
 & = & \sum_{m=1}^{M}\sum_{n=1}^{N}a_{m}b_{n}e^{-\xi_{m}t}\int_{0}^{t}\left(t-y\right)^{\alpha-1}y^{\beta-1}e^{\left(\xi_{m}-\eta_{n}\right)y}dy\\
 & = & t^{\alpha+\beta-1}r\left(t\right),
\end{eqnarray*}
and replacing the beta function $B\left(x,y\right)$ in \cite[Formula 3.383.1]{GRA-RYZ:2015}
via its functional relation with the gamma function \cite[Formula 8.384.1]{GRA-RYZ:2015},
\[
B\left(x,y\right)=\frac{\Gamma\left(x\right)\Gamma\left(y\right)}{\Gamma\left(x+y\right)}.
\]
\end{proof}
\begin{rem*}
The fact that the sum is independent of the order of $X$ and $Y$,
$p_{X+Y}=p_{Y+X}$, follows from Kummer's first transformation of
$\,_{1}F_{1}$ \cite[p. 191]{AN-AS-RO:1999}. %
\end{rem*}
Next we show how to compute the PDF of the product $p$ of two non-negative
independent random variables $X$ and $Y$. 
\begin{lem}
\label{lem:2}The PDF $p$ in \eqref{eq:integral traditional form}
of the product of two non-negative independent random variables $X$
and $Y$, with PDFs $f$ and $g$ in \eqref{eq:form to maintain}
and \eqref{eq:second PDF}, can be written as 
\begin{equation}
p\left(t\right)=p_{\left(\alpha+\beta-2-\left|\alpha-\beta\right|\right)/2}\left(t\right)=t^{\left(\alpha+\beta-2-\left|\alpha-\beta\right|\right)/2}v\left(t\right),\label{eq:final form}
\end{equation}
where 
\begin{equation}
v\left(t\right)=2\sum_{m=1}^{M}\sum_{n=1}^{N}a_{m}b_{n}\left(\frac{\eta_{n}}{\xi_{m}}\right)^{\left(\alpha-\beta\right)/2}t^{\left|\alpha-\beta\right|/2}K_{\left|\alpha-\beta\right|}\left(2\sqrt{t\xi_{m}\eta_{n}}\right)\label{eq:function_q}
\end{equation}
and $K$ is the modified Bessel function of the second kind. 
\end{lem}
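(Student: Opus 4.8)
The plan is to substitute the two density representations directly into the product formula \eqref{eq:integral traditional form} and reduce the computation to a single classical integral. Writing $g\left(t/x\right)=\left(t/x\right)^{\beta-1}\sum_{n=1}^{N}b_{n}e^{-\eta_{n}t/x}$ and combining the factors $x^{\alpha-1}$, $\left(t/x\right)^{\beta-1}$ and $x^{-1}$, the powers of $x$ collapse to $x^{\alpha-\beta-1}$ while a factor $t^{\beta-1}$ pulls out of the integral. Since both sums are finite, interchanging summation and integration is immediate, so that
\[
p\left(t\right)=t^{\beta-1}\sum_{m=1}^{M}\sum_{n=1}^{N}a_{m}b_{n}\int_{0}^{\infty}x^{\alpha-\beta-1}e^{-\xi_{m}x-\eta_{n}t/x}\,dx.
\]

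The key step is to evaluate the inner integral. I would invoke the standard representation \cite[Formula 3.471.9]{GRA-RYZ:2015},
\[
\int_{0}^{\infty}x^{\nu-1}e^{-\lambda x-\mu/x}\,dx=2\left(\frac{\mu}{\lambda}\right)^{\nu/2}K_{\nu}\left(2\sqrt{\lambda\mu}\right),
\]
valid for $\mathcal{R}e\left(\lambda\right)>0$ and $\mathcal{R}e\left(\mu\right)>0$, applied with $\nu=\alpha-\beta$, $\lambda=\xi_{m}$ and $\mu=\eta_{n}t$. For $t>0$ the hypotheses hold because $\mathcal{R}e\left(\xi_{m}\right)>0$ and $\mathcal{R}e\left(\eta_{n}t\right)=t\,\mathcal{R}e\left(\eta_{n}\right)>0$, so each integral converges absolutely; for genuinely complex exponents the formula is understood via analytic continuation from the real-parameter case. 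This produces the factor $2\left(\eta_{n}t/\xi_{m}\right)^{\left(\alpha-\beta\right)/2}K_{\alpha-\beta}\left(2\sqrt{t\xi_{m}\eta_{n}}\right)$ in each term of the double sum.

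Finally I would use the order symmetry $K_{\nu}=K_{-\nu}$ of the modified Bessel function of the second kind to rewrite the Bessel order as $\left|\alpha-\beta\right|$, and then collect the powers of $t$. Combining the external $t^{\beta-1}$ with the $t^{\left(\alpha-\beta\right)/2}$ produced by the integral gives the total monomial $t^{\left(\alpha+\beta-2\right)/2}$; splitting this as $t^{\left(\alpha+\beta-2-\left|\alpha-\beta\right|\right)/2}\cdot t^{\left|\alpha-\beta\right|/2}$ separates out the leading monomial factor of \eqref{eq:final form} and leaves the $t^{\left|\alpha-\beta\right|/2}$ term together with the Bessel factor, giving exactly $v\left(t\right)$ in \eqref{eq:function_q}. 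As a sanity check, the extracted exponent equals $\min\left(\alpha,\beta\right)-1$, which is the expected behavior near the origin. The proof is essentially routine once the right integral is identified; the only points requiring care are the justification of the integral formula for complex $\xi_{m},\eta_{n}$ with positive real parts (branch choices for the fractional power and the square root, plus analytic continuation) and the correct bookkeeping of the sign of $\alpha-\beta$ through the Bessel symmetry, so that the monomial exponent comes out symmetric in $\alpha$ and $\beta$ as it must.
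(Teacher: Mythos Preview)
Your proof is correct and follows essentially the same route as the paper: substitute the representations into \eqref{eq:integral traditional form}, pull out $t^{\beta-1}$, evaluate the resulting integral via \cite[3.471.9]{GRA-RYZ:2015}, invoke $K_{-\nu}=K_{\nu}$, and regroup the powers of $t$. The paper additionally records the small-$t$ asymptotics of $K_{\left|\alpha-\beta\right|}$ (via the Kummer $U$ function) to check that $v$ has at most a logarithmic singularity at the origin, which motivates the particular split of the monomial factor but is not needed to establish the stated identity.
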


\begin{proof}
We rewrite \eqref{eq:integral traditional form} as 
\begin{equation}
p\left(t\right)=t^{\beta-1}\sum_{m=1}^{M}\sum_{n=1}^{N}a_{m}b_{n}\int_{0}^{\infty}x^{\alpha-\beta-1}e^{-\xi_{m}x}e^{-\eta_{n}\frac{t}{x}}dx.\label{eq:to compute}
\end{equation}
From \cite[3.471.9]{GRA-RYZ:2015} and using that $K_{-\nu}=K_{\nu}$,
we have 
\[
\int_{0}^{\infty}x^{\nu-1}e^{-\left(\xi x+\eta t/x\right)}dx=2\left(\frac{\eta t}{\xi}\right)^{\frac{\nu}{2}}K_{\left|\nu\right|}\left(2\sqrt{t\eta\xi}\right),\,\,\,\mathcal{R}e\left(\xi\right)>0\,\,\,\mbox{and}\,\,\,\mathcal{R}e\left(\eta\right)>0,
\]
where, in our case, $\nu=\alpha-\beta$. We thus obtain
\begin{eqnarray*}
p\left(t\right) & = & 2t^{\beta-1}\sum_{m=1}^{M}\sum_{n=1}^{N}a_{m}b_{n}\left(\frac{\eta_{n}t}{\xi_{m}}\right)^{\left(\alpha-\beta\right)/2}K_{\left|\alpha-\beta\right|}\left(2\sqrt{t\xi_{m}\eta_{n}}\right)\\
 & = & 2t^{\left(\alpha+\beta-2\right)/2}\sum_{m=1}^{M}\sum_{n=1}^{N}a_{m}b_{n}\left(\frac{\eta_{n}}{\xi_{m}}\right)^{\left(\alpha-\beta\right)/2}K_{\left|\alpha-\beta\right|}\left(2\sqrt{t\xi_{m}\eta_{n}}\right).
\end{eqnarray*}
In order to determine the behavior of $K_{\left|\alpha-\beta\right|}\left(2\sqrt{t\xi_{m}\eta_{n}}\right)$
near $t=0$, we first use \cite[13.6.10]{O-D-L-S:2016} to write 
\[
K_{\left|\nu\right|}(z)=\sqrt{\pi}(2z)^{\left|\nu\right|}e^{-z}U(\frac{1}{2}+\left|\nu\right|,1+2\left|\nu\right|,2z).
\]
The asymptotics of the function $U(\frac{1}{2}+\left|\nu\right|,1+2\left|\nu\right|,2z)$
near $z=0$ is fully described in \cite[13.5.6-12]{ABR-STE:1970}
for different parameters $\left|\nu\right|$. For $\alpha\ne\beta$
we obtain the asymptotics of $K_{\left|\alpha-\beta\right|}\left(\sqrt{t}\right)$
near $t=0$ as
\[
K_{\left|\alpha-\beta\right|}\left(\sqrt{t}\right)\sim t^{-\left|\alpha-\beta\right|/2}.
\]
Factoring out $t^{\left(\alpha+\beta-2\right)/2-\left|\alpha-\beta\right|/2}$,
we obtain \eqref{eq:final form} so that the function $v$ has at
most a logarithmic singularity at zero. Indeed, if $\alpha\ne\beta$
then $v$ has a finite value at $t=0$ whereas if $\alpha=\beta$
then the modified Bessel function $K_{0}$ has a logarithmic singularity
at $t=0$. 
\end{proof}
\begin{rem}
If in Lemma~\ref{lem:2} the exponents in the representations of
$f$ and $g$ (see \eqref{eq:form to maintain} and \eqref{eq:second PDF}),
satisfy $\mathcal{R}e\left(\xi_{m}\eta_{n}\right)>0$ for all $m,n$,
there is a simpler expression for the function $v$ in \eqref{eq:function_q}.
Using the integral

\begin{equation}
2^{p+1}\left(\frac{y}{x}\right){}^{p}K_{p}\left(xy\right)=\int_{-\infty}^{\infty}e^{-x^{2}e^{\tau}/4-y^{2}e^{-\tau}+p\tau}d\tau,\label{BesselK_integral}
\end{equation}
 (see \cite[Eq. 36]{BEY-MON:2010}) and setting $x=2\sqrt{\xi_{m}\eta_{n}}$,
$y=\sqrt{t}$, and $p=\left|\alpha-\beta\right|$, we have
\begin{equation}
2t^{\left|\alpha-\beta\right|/2}K_{\left|\alpha-\beta\right|}\left(2\sqrt{t\xi_{m}\eta_{n}}\right)=\left(\xi_{m}\eta_{n}\right)^{\left|\alpha-\beta\right|/2}\int_{-\infty}^{\infty}e^{-\xi_{m}\eta_{n}e^{\tau}-te^{-\tau}+\left|\alpha-\beta\right|\tau}d\tau.\label{BesselK_integral-2}
\end{equation}
Assuming $\alpha\ge\beta$, we obtain 
\begin{equation}
v\left(t\right)=\sum_{m=1}^{M}\sum_{n=1}^{N}a_{m}b_{n}\eta_{n}^{\left|\alpha-\beta\right|/2}\int_{-\infty}^{\infty}e^{-\xi_{m}\eta_{n}e^{\tau}-te^{-\tau}+\left|\alpha-\beta\right|\tau}d\tau\label{eq:function_q-1-1}
\end{equation}
or
\begin{equation}
v\left(t\right)=\int_{-\infty}^{\infty}\sigma\left(\tau\right)e^{-te^{-\tau}}d\tau,\label{eq:int-rep-simplified}
\end{equation}
where 
\begin{equation}
\sigma\left(\tau\right)=\sum_{m=1}^{M}\sum_{n=1}^{N}a_{m}b_{n}\eta_{n}^{\left|\alpha-\beta\right|}e^{-\xi_{m}\eta_{n}e^{\tau}+\left|\alpha-\beta\right|\tau}.\label{eq:weight-simplified}
\end{equation}
Assuming $\beta>\alpha$, we have
\[
\sigma\left(\tau\right)=\sum_{m=1}^{M}\sum_{n=1}^{N}a_{m}b_{n}\xi_{m}^{\left|\alpha-\beta\right|}e^{-\xi_{m}\eta_{n}e^{\tau}+\left|\alpha-\beta\right|\tau},
\]
and combining both cases, obtain
\[
\sigma\left(\tau\right)=\sum_{m=1}^{M}\sum_{n=1}^{N}a_{m}b_{n}\left(\text{sign}\left(\alpha-\beta\right)\frac{\eta_{n}-\xi_{m}}{2}+\frac{\eta_{n}+\xi_{m}}{2}\right)^{\left|\alpha-\beta\right|}e^{-\xi_{m}\eta_{n}e^{\tau}+\left|\alpha-\beta\right|\tau}.
\]
In order to represent the PDF $p$ of the product in the form \eqref{eq:form to maintain},
we need to approximate $v\left(t\right)$ as a linear combination
of exponentials. With that goal and following \cite{BEY-MON:2010},
we first discretize \eqref{eq:int-rep-simplified} using the trapezoidal
rule. Since the approximation obtained via this discretization may
have an excessive number of terms, we can then use the algorithm in
\cite{HAU-BEY:2012} to minimize their number.
\end{rem}

Finally, we show how to compute the PDF of the quotient $q$ of two
non-negative independent random variables $X$ and $Y$. 
\begin{lem}
\label{lem:3}The PDF $q$ in \eqref{eq:integral traditional form}
of the quotient $X/Y$ of two non-negative independent random variables
$X$ and $Y$, with PDFs $f$ and $g$ in \eqref{eq:form to maintain}
and \eqref{eq:second PDF}, can be written as 
\begin{equation}
q\left(t\right)=t^{\alpha-1}w\left(t\right),\label{eq:final form-1}
\end{equation}
where 
\begin{equation}
w\left(t\right)=\Gamma\left(\alpha+\beta\right)\sum_{m=1}^{M}\sum_{n=1}^{N}a_{m}b_{n}\left(\xi_{m}t+\eta_{n}\right)^{-\alpha-\beta}.\label{eq:function_q-1}
\end{equation}
\end{lem}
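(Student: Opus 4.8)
The plan is to proceed exactly as in the proofs of Lemmas~\ref{lem:1} and~\ref{lem:2}: substitute the explicit functional forms of $f$ and $g$ into the quotient integral, interchange the (finite) sums with the integral, and recognize the resulting one-dimensional integral as a standard Gamma integral. Concretely, I would start from the second line of \eqref{eq:integral to compute quotient}, namely $q(t)=\int_{0}^{\infty}f(ty)\,g(y)\,y\,dy$, and insert $f(ty)=(ty)^{\alpha-1}\sum_{m}a_{m}e^{-\xi_{m}ty}$ together with $g(y)=y^{\beta-1}\sum_{n}b_{n}e^{-\eta_{n}y}$. Pulling out the factor $t^{\alpha-1}$ and the coefficients $a_{m}b_{n}$, the key observation is that the $y$-dependence collapses cleanly: the powers of $y$ combine to $y^{(\alpha-1)+(\beta-1)+1}=y^{\alpha+\beta-1}$, while the two exponentials combine to $e^{-(\xi_{m}t+\eta_{n})y}$. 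This is what makes the quotient substantially easier than the product case of Lemma~\ref{lem:2}: because both $ty$ and $y$ are linear in the integration variable, no Bessel function arises and we are left with a pure monomial-times-exponential integrand.

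The next step is to evaluate
\[
\int_{0}^{\infty}y^{\alpha+\beta-1}e^{-(\xi_{m}t+\eta_{n})y}\,dy=\Gamma(\alpha+\beta)\,(\xi_{m}t+\eta_{n})^{-\alpha-\beta},
\]
which is the defining integral of the Gamma function (equivalently \cite[3.381.4]{GRA-RYZ:2015}), valid whenever $\mathcal{R}e(\alpha+\beta)>0$ and $\mathcal{R}e(\xi_{m}t+\eta_{n})>0$. Collecting the double sum and the common prefactor $t^{\alpha-1}$ then yields $q(t)=t^{\alpha-1}w(t)$ with $w$ exactly as in \eqref{eq:function_q-1}, completing the argument.

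The only point requiring a moment of care — and hence the step I would flag as the main (though modest) obstacle — is justifying the convergence of each integral, since the exponents $\xi_{m},\eta_{n}$ are complex. Here the hypotheses \eqref{eq:form to maintain} and \eqref{eq:second PDF} give $\mathcal{R}e(\xi_{m})>0$ and $\mathcal{R}e(\eta_{n})>0$, so for every $t\ge0$ we have $\mathcal{R}e(\xi_{m}t+\eta_{n})=t\,\mathcal{R}e(\xi_{m})+\mathcal{R}e(\eta_{n})>0$, which guarantees absolute convergence of the Gamma integral for each pair $(m,n)$. Because $M$ and $N$ are finite, the interchange of the finite sums with the integral is immediate, and the branch of $(\xi_{m}t+\eta_{n})^{-\alpha-\beta}$ is the principal one determined by $\mathcal{R}e(\xi_{m}t+\eta_{n})>0$, so no further analytic-continuation bookkeeping is needed. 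I would also briefly note, as in Lemma~\ref{lem:2}, that the monomial factor $t^{\alpha-1}$ correctly captures the behavior of $q$ near $t=0$, while $w(t)$ is smooth and bounded there since $\xi_{m}t+\eta_{n}\to\eta_{n}\ne0$.
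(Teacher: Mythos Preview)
Your proposal is correct and follows essentially the same route as the paper: substitute the exponential-sum forms of $f$ and $g$ into $q(t)=\int_{0}^{\infty}f(ty)g(y)\,y\,dy$, pull out $t^{\alpha-1}$, and evaluate the remaining integral $\int_{0}^{\infty}y^{\alpha+\beta-1}e^{-(\xi_{m}t+\eta_{n})y}\,dy=\Gamma(\alpha+\beta)(\xi_{m}t+\eta_{n})^{-\alpha-\beta}$. Your additional remarks on verifying $\mathcal{R}e(\xi_{m}t+\eta_{n})>0$ and on the behavior of $w$ near $t=0$ are useful elaborations that the paper leaves implicit.
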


\begin{proof}
We rewrite \eqref{eq:integral to compute quotient} as
\begin{equation}
q\left(t\right)=t^{\alpha-1}\sum_{m=1}^{M}\sum_{n=1}^{N}a_{m}b_{n}\int_{0}^{\infty}y^{\alpha+\beta-1}e^{-\xi_{m}ty}e^{-\eta_{n}y}dy.\label{eq:to compute-quotient}
\end{equation}
Using
\[
\int_{0}^{\infty}y^{\nu-1}e^{-\gamma y}dy=\gamma^{-\nu}\Gamma\left(\nu\right),\,\,\,\mathcal{R}e\left(\nu\right)>0,\,\,\,\mathcal{R}e\left(\gamma\right)>0,
\]
we arrive at the result.
\end{proof}
Since we would like to maintain the form \eqref{eq:form to maintain}
of the PDFs of the sum $s$, product $p$ and quotient $q$, we seek
their representation as 
\[
s\left(t\right)=t^{\alpha+\beta-1}\sum_{k=1}^{J_{1}}c_{k}^{\left(1\right)}e^{-\omega_{k}^{\left(1\right)}t},\,\,\,\mathcal{R}e\left(\omega_{k}^{\left(1\right)}\right)>0,
\]
 
\[
p\left(t\right)=t^{\left(\alpha+\beta-2-\left|\alpha-\beta\right|\right)/2}\sum_{k=1}^{J_{2}}c_{k}^{\left(2\right)}e^{-\omega_{k}^{\left(2\right)}t},\,\,\,\mathcal{R}e\left(\omega_{k}^{\left(2\right)}\right)>0,
\]
and 
\[
q\left(t\right)=t^{\alpha-1}\sum_{k=1}^{J_{3}}c_{k}^{\left(3\right)}e^{-\omega_{k}^{\left(3\right)}t},\,\,\,\mathcal{R}e\left(\omega_{k}^{\left(3\right)}\right)>0
\]
where the number of terms $J_{1}$, $J_{2}$, and $J_{3}$ are (near)
optimal in each of these constructions. We solve this approximation
problem by first sampling at equally spaced nodes the functions $r$,
$v$ or $w$ of Lemmas~\ref{lem:1}-\ref{lem:3}, forming a Hankel
matrix with these samples, and then applying the algorithms described
in the next section. 

In what follows, we denote by $v$ the function that we seek to approximate
by a linear combination of decaying and possibly oscillatory exponentials,
\begin{equation}
\left|v\left(t\right)-\sum_{k=1}^{M}c_{k}e^{-\omega_{k}t}\right|\le\epsilon,\,\,\,\mathcal{R}e\left(\omega_{k}\right)>0,\label{eq:need to approximate}
\end{equation}
where the number of terms, $M$, is as small as possible.

\section{Algorithms for computing exponential representations\label{sec:Algorithm-for-computing}}

Numerical approximation of functions by exponentials can be understood
as a finite dimensional version of AAK theory \cite{AD-AR-KR:1968,AD-AR-KR:1968a,AD-AR-KR:1971};
this connection has been addressed in e.g. \cite{BEY-MON:2005,BEY-MON:2009,HAU-BEY:2012}.
Here we only briefly describe algorithms already developed for this
purpose. We show how to compute the exponents $\omega_{k}$ and coefficients
$c_{k}$ in \eqref{eq:need to approximate} from $2N+1$ equispaced
samples $v_{n}=v\left(Rn/\left(2N\right)\right)$, $n=0,\ldots,2N$,
where the range $R$ and the step size $R/\left(2N\right)$ are chosen
so that $v\left(t\right)$ is sufficiently over-sampled and $\left|v\left(t\right)\right|<\epsilon$,
$t\geq R$. Thus, we solve the discretized problem
\begin{equation}
\left|v\left(\frac{R}{2N}n\right)-\sum_{k=1}^{M}c_{k}\gamma_{k}^{n}\right|\le\epsilon,\,\,\,\left|\gamma_{k}\right|<1,\label{eq:discrete_approx}
\end{equation}
where we seek nodes $\gamma_{k}$ and coefficients $c_{k}$ so that
the number of terms $M$ is minimal. The exponents $\omega_{k}$ in
\eqref{eq:need to approximate} are related to the nodes $\gamma_{k}$
by 
\[
\gamma_{k}=e^{-\frac{R}{2N}\omega_{k}}.
\]
Currently there are two algorithms for obtaining the approximation
\eqref{eq:discrete_approx}; both use the Hankel matrix constructed
from the samples $v_{n}$, $H=\left[v_{i+j}\right]_{i,j=0}^{N}$(see
\cite{KU-AR-BH:1983,HUA-SAR:1988,HUA-SAR:1990,HUA-SAR:1991} and \cite{BEY-MON:2005,RE-BE-MO:2013a}). 

We first present the key steps of the so-called HSVD (or matrix pencil)
algorithm \cite{KU-AR-BH:1983,HUA-SAR:1988,HUA-SAR:1990,HUA-SAR:1991}.
In Algorithm~\ref{alg:Computing-exponential-representations-I},
$X^{\dagger}$ denotes the pseudo-inverse of the matrix $X$, $X\left(m:n,:\right)$
denotes the sub-matrix consisting of rows $m$ through $n$, $X=\left(\mathbf{x}_{1}\dots\mathbf{x}_{M}\right)$
denotes the matrix consisting of the column vectors $\mathbf{x}_{1}$,
$\mathbf{x}_{2}$, ... $\mathbf{x}_{M}$ and $\mathbf{v}$ denotes
the vector of samples, $\mathbf{v}=\left(v_{0},\dots,v_{2N}\right)$.

\begin{algorithm}[h]
\caption{Computing exponential representations I\foreignlanguage{english}{\label{alg:Computing-exponential-representations-I}}}
\begin{enumerate}
\item For a desired accuracy $\epsilon$, compute $M$ con-eigenvectors
and corresponding con-eigenvalues of $H=\left[v_{i+j}\right]_{i,j=0}^{N},$
$H\mathbf{u}_{m}=\sigma_{m}\overline{\mathbf{u}}_{m}$, $m=0,\ldots,M-1$,
where $\sigma_{0}\ge\sigma_{1}\ge\dots\ge\sigma_{M-1}\ge\sigma_{M}$,
such that $\sigma_{M}/\sigma_{0}<\epsilon$. A solution to this problem
is guaranteed by Tagaki's factorization \cite{HOR-JOH:1990} and may
be reduced to finding the SVD of $H$.
\item Form the $M\times M$ matrix $U_{3}=U_{1}^{\dagger}U_{2}$ , where
$U=\left(\mathbf{u_{0}}\dots\mathbf{u}_{M-1}\right)$, $U_{1}=U\left(0:N-1,1:M\right)$,
and $U_{2}=U\left(1:N,1:M\right)$.
\item Compute the $M$ eigenvalues of $U_{3}$. They coincide with the nodes
$\gamma_{k}$ in \eqref{eq:discrete_approx}.
\item Compute the coefficients $c_{k}$ in \eqref{eq:discrete_approx} via
$\mathbf{c}=V^{\dagger}\mathbf{v}$, where $V$ is the $\left(2N+1\right)\times M$
Vandermonde matrix of entries $V_{nk}=\gamma_{k}^{n}$, $k=1,\ldots,M$
and $n=0,\ldots,2N$.
\end{enumerate}
\end{algorithm}

An alternative algorithm, described below as Algorithm~\ref{alg:Computing-exponential-representation-II},
was introduced in \cite{BEY-MON:2005} (see also \cite{BEY-MON:2010,HAU-BEY:2012,RE-BE-MO:2013a});
it relies on solving a con-eigenvalue problem for the Hankel matrix
$H$ (solution of which is guaranteed by Tagaki's factorization \cite{HOR-JOH:1990})
and may be reduced to finding the Singular Value Decomposition (SVD)
of $H$. Unlike Algorithm~\ref{alg:Computing-exponential-representations-I},
it requires a single con-eigenvector of the same Hankel matrix as
in Algorithm~\ref{alg:Computing-exponential-representations-I}.
We have implemented and used both algorithms. 

\begin{algorithm}
\caption{Computing exponential representations II\label{alg:Computing-exponential-representation-II}}
\begin{enumerate}
\item Given $\epsilon$, the desired accuracy, compute the con-eigenvector
$\mathbf{u}_{M}$ and the corresponding con-eigenvalue $\sigma_{M}$
of $H=\left[v_{i+j}\right]_{i,j=0}^{N}$, $H\mathbf{u}_{M}=\sigma_{M}\overline{\mathbf{u}}_{M}$,
such that $\sigma_{M}/\sigma_{0}<\epsilon$, where $\sigma_{0}$ is
the largest con-eigenvalue. A solution is guaranteed by Tagaki's factorization
\cite{HOR-JOH:1990} and may be reduced to finding the $M+1$ singular
vector of $H$.
\item Compute the roots $\gamma_{j}$ of the polynomial 
\begin{equation}
u(z)=\sum_{l=0}^{N}u_{l}z^{l},\label{eq: Eq. for roots}
\end{equation}
where $\mathbf{u}_{M}=\left\{ u_{l}\right\} _{l=0}^{N}$.
\item The exponents $\omega_{k}$ in equation \eqref{eq:need to approximate}
are defined by the roots $\gamma_{k}$ inside the unit disk via $\omega_{k}=-\frac{2N}{R}\log(\gamma_{k})$,
where $\log$ is the principal value of the logarithm.
\item Compute the coefficients $c_{k}$ in \eqref{eq:discrete_approx} via
$\mathbf{c}=V^{\dagger}\mathbf{v}$, where $V$ is the $\left(2N+1\right)\times M$
Vandermonde matrix of entries $V_{nk}=\gamma_{k}^{n}$, $k=1,\ldots,M$
and $n=0,\ldots,2N$.
\end{enumerate}
\end{algorithm}

Importantly, we implemented a fast SVD solver for Step~$1$ of Algorithms~\ref{alg:Computing-exponential-representations-I}~and~\ref{alg:Computing-exponential-representation-II}
using the randomized approach developed in \cite{C-G-M-R:2005,L-W-M-R-T:2007,HA-MA-TR:2011}
and the fact that Hankel matrices can be applied in $\mathcal{O}\left(N\log\left(N\right)\right)$
operations using the Fast Fourier Transform (FFT). We describe it
as Algorithm~\ref{alg:Computing-fast-SVD-Hankel} and note that the
number $M$ of singular vectors needed to achieve accuracy $\epsilon$
is usually unknown. If $M$ is chosen correctly, then the smallest
pivots computed in Step~2 of Algorithm~\ref{alg:Computing-fast-SVD-Hankel}
will be less than $\epsilon$. However, if all pivots are greater
than $\epsilon$, then by doubling $M$, Steps~1~and~2 of Algorithm~\ref{alg:Computing-fast-SVD-Hankel}
can be repeated until the desired size of pivots is achieved. 

\begin{algorithm}[h]
\caption{Computing fast SVD of a low rank Hankel matrix \label{alg:Computing-fast-SVD-Hankel}}
\begin{enumerate}
\item Apply the Hankel matrix $H$ to $M'$ random normally distributed
vectors $Y$, where $M'\ge M+p$, to obtain the $N+1\times M'$ matrix
$L=HY$. Here $p$ is the so-called oversampling parameter, essentially
a constant, see \cite{HA-MA-TR:2011} for details. This step requires
$\mathcal{O}\left(MN\log N\right)$ operations.
\item Compute the rank revealing (pivoted) QR decomposition of the matrix
$L=QR$, where $Q$ is $N+1\times M'$. This step requires $\mathcal{O}\left(M^{2}N\right)$
operations.
\item Apply the adjoint Hankel matrix $H^{*}$ to $Q$ to obtain $H^{*}Q$,
an $N+1\times M'$ matrix. This step requires $\mathcal{O}\left(MN\log N\right)$
operations.
\item Compute the SVD of $H^{*}Q=U\Sigma V^{*}$, so that $H=QV\Sigma U^{*}$.
Since the Hankel matrix $H$ has a Tagaki's decomposition, $H=\overline{U}\Sigma U^{*}$,
the computed matrix $U$ may differ from the one in the Tagaki's factorization
by a factor in the form of a diagonal matrix with diagonal elements
of modulus one. This unknown factor does not play a role in Step~2
of Algorithm~\ref{alg:Computing-exponential-representations-I} as
it cancels out. The $N+1\times M'$ matrix $\overline{U}$ is the
desired matrix for Step~2 of Algorithm~\ref{alg:Computing-exponential-representations-I}.
This step requires $\mathcal{O}\left(M^{2}N\right)$ operations.
\end{enumerate}
\end{algorithm}

The Fast SVD construction in Algorithm~\ref{alg:Computing-fast-SVD-Hankel}
reduces the overall cost of Algorithms~\ref{alg:Computing-exponential-representations-I}~and~\ref{alg:Computing-exponential-representation-II}
to $\mathcal{O}\left(MN\log\left(N\right)+M^{2}N\right)$ operations,
where the (implicit) constant is small. Since in our application $M\ll N$
(e.g. $M=20$), the cost of the algorithm is essentially linear in
the number of samples $N$. In our experience, the choice of the singular
value $\sigma_{M}$, $\sigma_{M}/\sigma_{0}<\epsilon$, in Algorithm~\ref{alg:Computing-exponential-representations-I}~and~\ref{alg:Computing-exponential-representation-II}
always results in an $\mathcal{O}\left(\epsilon\right)$ error bound.

We note that we can always check the approximation error a posteriori,
using e.g. the already computed values $v_{n}=v\left(Rn/\left(2N\right)\right)$,
and select a smaller singular value, if necessary. The connection
between the accuracy $\epsilon$ and the ratio of the $M$th and the
largest singular values, $\sigma_{M}/\sigma_{0}$ in Algorithms~\ref{alg:Computing-exponential-representations-I}~and~\ref{alg:Computing-exponential-representation-II}
is one of the key features of AAK theory \cite{AD-AR-KR:1968,AD-AR-KR:1968a,AD-AR-KR:1971}
for semi-infinite Hankel matrices.
\begin{rem}
\label{rem: repeated approxiamtion}The function $v$ in \eqref{eq:need to approximate}
may change rapidly near zero (e.g. it can have a logarithmic singularity
at zero) so that it requires sampling with a small step size. Due
to the equally spaced sampling of $v$ (see \eqref{eq:discrete_approx}),
the size of the matrix $H$ in Algorithms~\ref{alg:Computing-exponential-representations-I}
and \ref{alg:Computing-exponential-representation-II} can be large.
Although we have a fast algorithm for computing the SVD of a large
matrix (Algorithm~\ref{alg:Computing-fast-SVD-Hankel}), we can instead
apply Algorithm~\ref{alg:Computing-exponential-representations-I}
or \ref{alg:Computing-exponential-representation-II} several times
using first a coarse sampling (sufficient in an interval away from
zero) and then subtracting the result from $v$ so that the essential
support of the difference is reduced. Specifically, given a function
$v\left(t\right)$ with $\left|v\left(t\right)\right|<\epsilon$,
$t\geq R$, we approximate it by $\tilde{v}\left(t\right)$,
\begin{equation}
\tilde{v}\left(t\right)=\sum_{i=1}^{K}\tilde{v}_{i}\left(t\right),\label{eq:combined approximation}
\end{equation}
where each $\tilde{v}_{i}\left(t\right)$ is obtained using Algorithms~\ref{alg:Computing-exponential-representations-I}
or \ref{alg:Computing-exponential-representation-II} by sampling
the function $v_{i}$
\end{rem}

\[
v_{i}\left(t\right)=v\left(t\right)-\sum_{j=1}^{i-1}\tilde{v}_{j}\left(t\right),\ \ \ i=1,\ldots K
\]
on the interval $\left[0,b_{i}\right]$, with $b_{1}=R,$ and $b_{k}<b_{k-1}$,
$i=2,\ldots,K$. 

For instance, in Example~\ref{subsec:gam205305} this procedure was
applied on the intervals $\left[0,100\right]$, $\left[0,1\right]$
and $\left[0,10^{-2}\right]$ using $N=2000$ each time. The result
was accurate within the selected $\epsilon$ the first time on the
interval $\left[1,100\right]$, the second time on $\left[10^{-2},1\right]$,
and finally on $\left[10^{-4},10^{-2}\right]$. The resulting approximation
\eqref{eq:combined approximation} is accurate on the interval $\left[10^{-4},100\right]$
(see Figure~\ref{fig:gam205305-1}).

\section{Examples of computing PDFs of products of random variables\label{sec:Examples-of-computing}}

\subsection{Accuracy tests}

In a few cases, the PDFs of the product of positive random variables
are available analytically; we use those cases to demonstrate the
accuracy of our algorithm. 

\subsubsection{\label{subsec:gam205305}Product of two Gamma random variables}

The PDF of the Gamma distribution is given by 

\begin{equation}
f^{\gamma}\left(x;\alpha,\beta\right)=\frac{\beta^{\alpha}}{\Gamma\left(\alpha\right)}x^{\alpha-1}e^{-\beta x},\label{eq:gamma-pdf}
\end{equation}
where $\alpha>0$ and $\beta>0$ are called the shape and rate parameters.
Worth noting are two special cases of the Gamma distribution: when
$\alpha=1$ it is called exponential distribution, when $\beta=1/2$
it is known as chi-squared distribution. We also note that the PDF
of the Gamma distribution is already in the form (\ref{eq:form to maintain})
that we want to maintain. For Gamma-distributed random variables $X\sim f^{\gamma}(x;2,2)$
and $Y\sim f^{\gamma}(x;3,2)$ we compute the PDF $p_{Z}$ of their
product, $Z=XY$. The product PDF is available analytically as $p\left(t\right)=32t^{3/2}K_{1}\left(4\sqrt{t}\right)$,
where $K_{1}$ is a modified Bessel function of the second kind. Using
(\ref{eq:final form}) we compute the PDF of the product $p_{Z}$
and compare it with the analytic result $p$. The PDFs of the random
variables $X$ and $Y$ are displayed in Figure~\ref{fig:gam205305}
and the error, defined as
\begin{equation}
\epsilon\left(x\right)=\log_{10}\left(\left|p_{Z}\left(10^{x}\right)-p(10^{x})\right|\right),\,\,\,-5\le x\le1,\label{eq:gamma-err}
\end{equation}
is shown in Figure~\ref{fig:gam205305-1}.

\begin{figure}[H]
\begin{centering}
\includegraphics[scale=0.3]{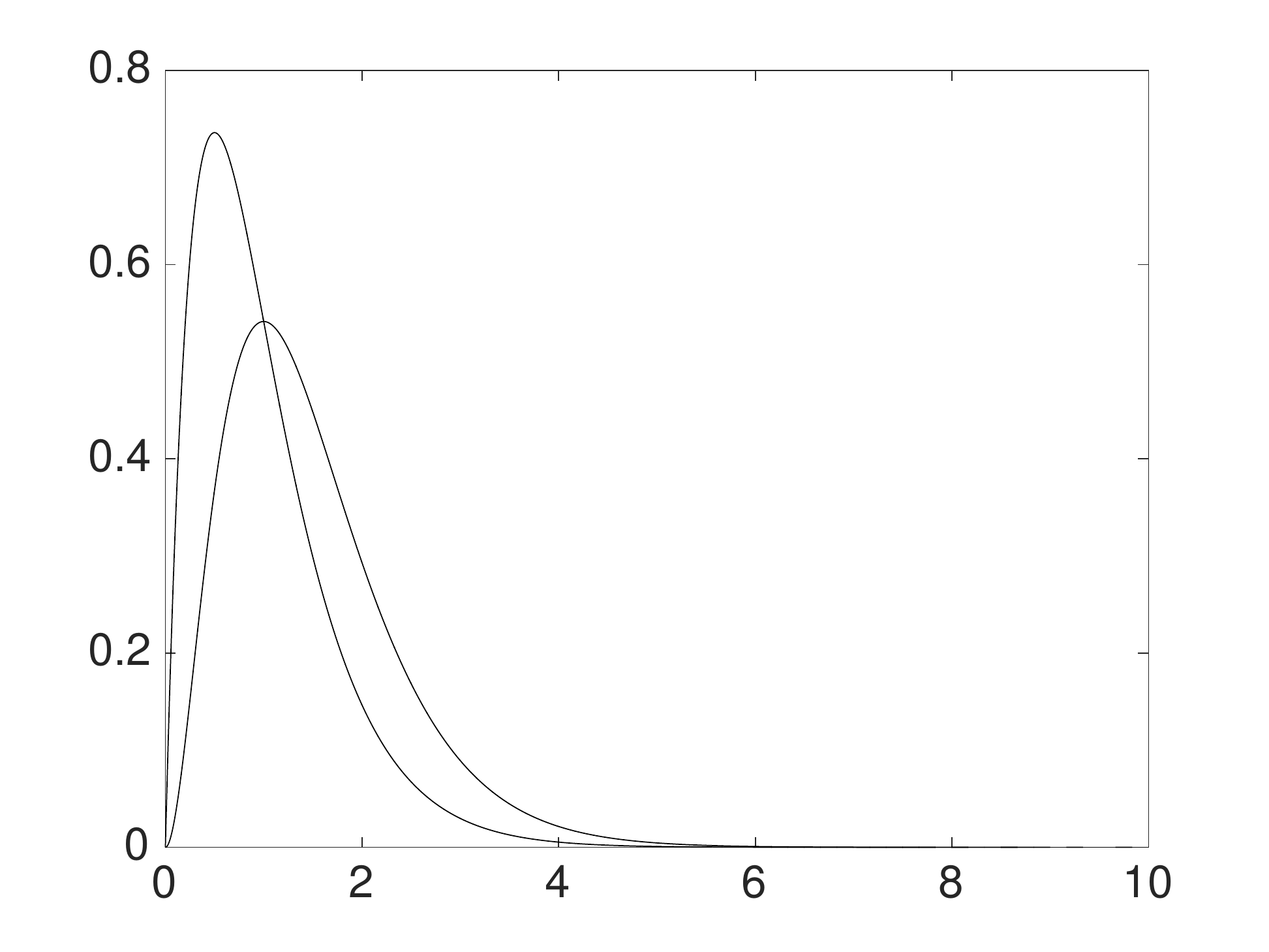}\includegraphics[scale=0.3]{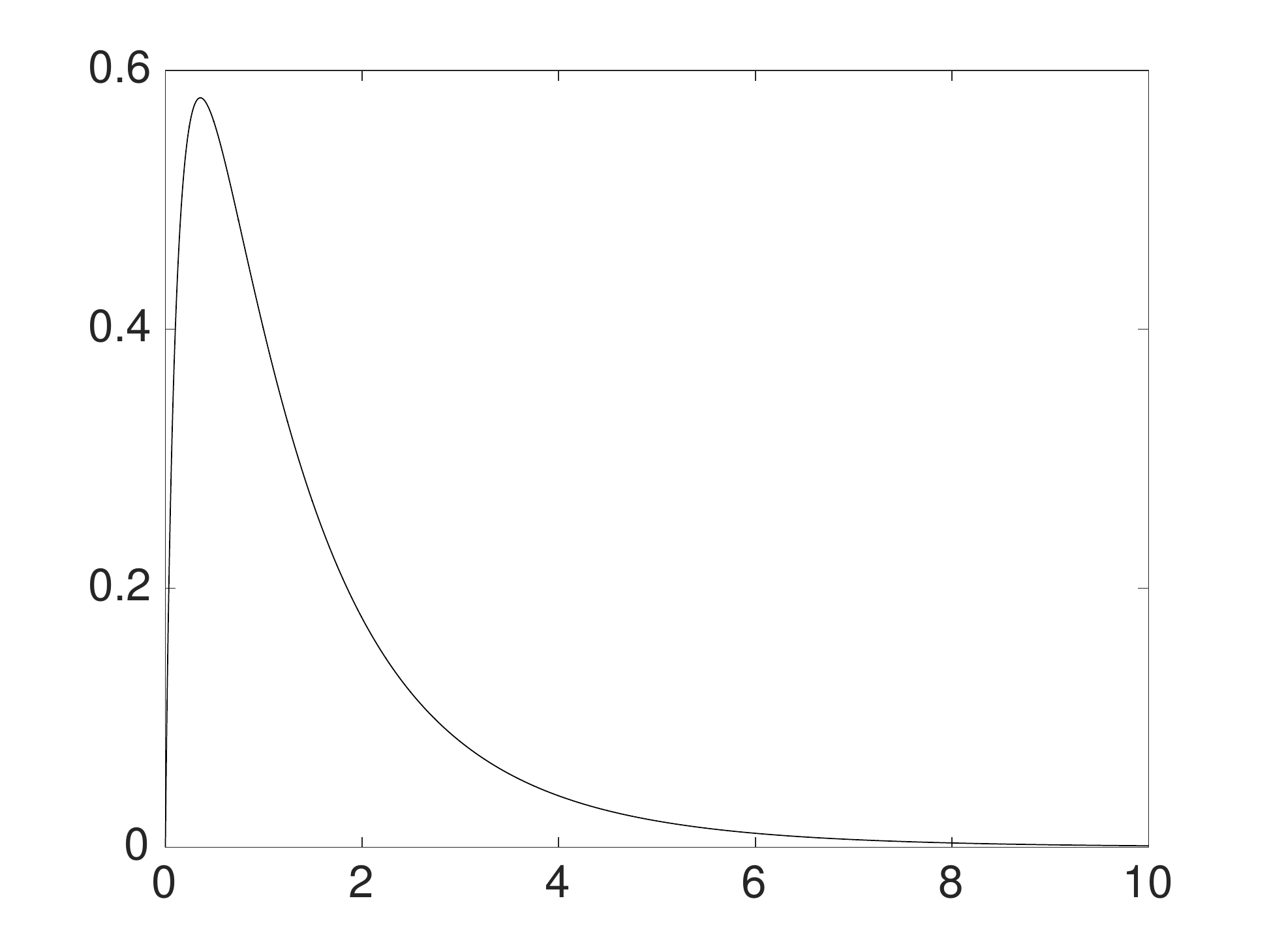}
\par\end{centering}
\centering{}\caption{\label{fig:gam205305} PDFs of the random variables $X$ and $Y$
(left) and PDF of the product random variable $Z=XY$ in Example~\ref{subsec:gam205305}
(right).}
\end{figure}
\begin{figure}[H]
\begin{centering}
\includegraphics[scale=0.3]{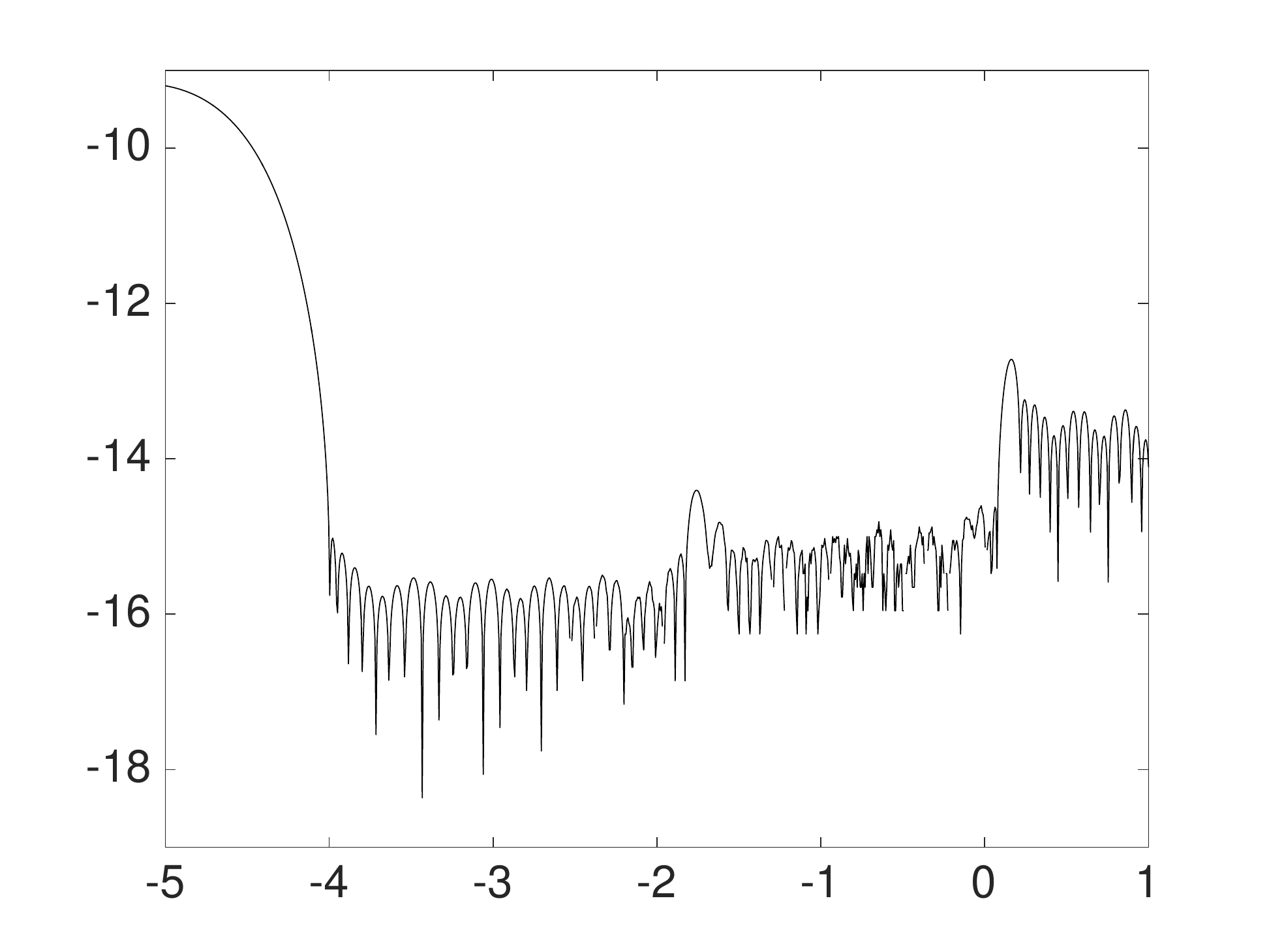}
\par\end{centering}
\centering{}\caption{\label{fig:gam205305-1} Error curve $\epsilon\left(x\right)$ (see
(\ref{eq:gamma-err})) in Example~\ref{subsec:gam205305}. Algorithm~\ref{alg:Computing-exponential-representation-II}
was applied three times to obtain this result (see Remark~\ref{rem: repeated approxiamtion}).}
\end{figure}

\subsubsection{\label{subsec:nak11}Product of Nakagami random variables}

The distributions of the product of Nakagami random variables have
applications in wireless communication systems \cite{Z-W-H-T:2012}.
The PDF of a Nakagami distributed random variable is given by

\begin{equation}
f^{\mathcal{N}}\left(x;m,\Omega\right)=\frac{2m^{m}}{\Gamma\left(m\right)\Omega^{m}}x^{2m-1}e^{-\frac{m}{\Omega}x^{2}},\label{eq:nak-pdf}
\end{equation}
where $m\geq1/2$ and $\Omega>0$ are called the shape and spread
parameters; as is well known, the Nakagami distribution is related
to the Gamma and Chi distributions. In this example, we compute the
PDFs of the product of two, four and eight Nakagami distributed random
variables. Given the random variable $X\sim f^{\mathcal{N}}\left(x,1,1\right)=2xe^{-x^{2}}$
(see Figure~\ref{fig:nak11x2}), we first employ either Algorithm~\ref{alg:Computing-exponential-representations-I}
or \ref{alg:Computing-exponential-representation-II} on the Gaussian
part of $f^{\mathcal{N}}$, $g\left(x\right)=2e^{-x^{2}}$, to obtain
its approximation, $\tilde{g}\left(x\right)$, in the form (\ref{eq:form to maintain})
with $\alpha=1$. To obtain $\tilde{g}$ it is sufficient to sample
$g$ on the interval $x\in\left[0,6\right]$ and use Algorithm~\ref{alg:Computing-exponential-representation-II}
to solve (\ref{eq:discrete_approx}) with $R=6$ and $N=500$, where
we set $\epsilon=10^{-11}$. In Figure~\ref{fig:gauss-err} we display
the error, 

\begin{equation}
\epsilon\left(x\right)=\log_{10}\left(\left|g\left(10^{x}\right)-\tilde{g}(10^{x})\right|\right),\,\,\,-12\le x\le1,\label{eq:gauss-err}
\end{equation}
of the approximation via $\tilde{g}(x)$. Figure~\ref{fig:gauss-nodes}
shows the location of the complex nodes $\xi_{m}$ in the resulting
representation of $\tilde{g}(x)$.

\begin{figure}[H]
\begin{centering}
\includegraphics[scale=0.3]{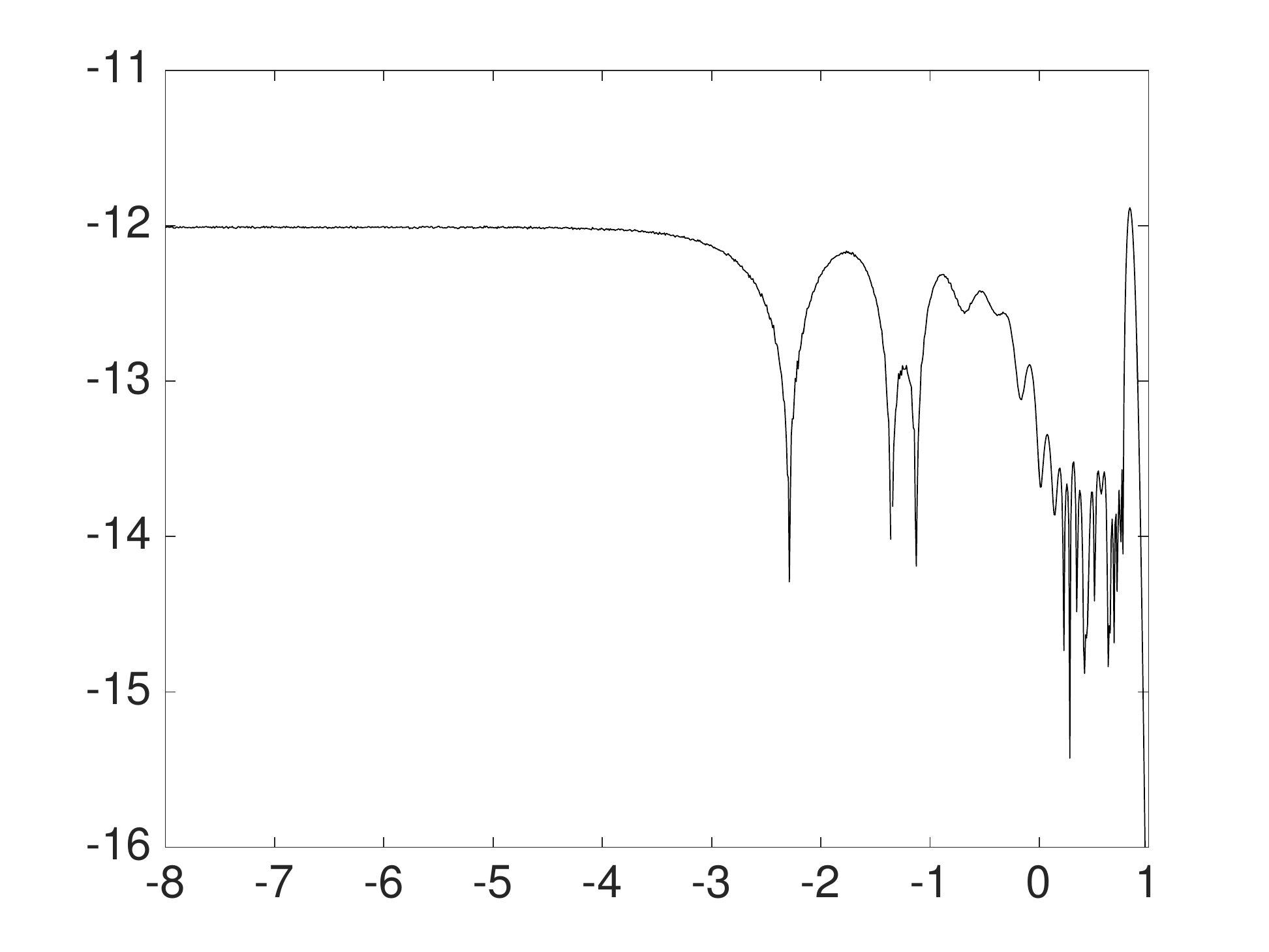}
\par\end{centering}
\centering{}\caption{\label{fig:gauss-err} Error curve $\epsilon\left(x\right)$ (see
(\ref{eq:gauss-err})) in Example~\ref{subsec:nak11}.}
\end{figure}

\begin{figure}[H]
\begin{centering}
\includegraphics[scale=0.3]{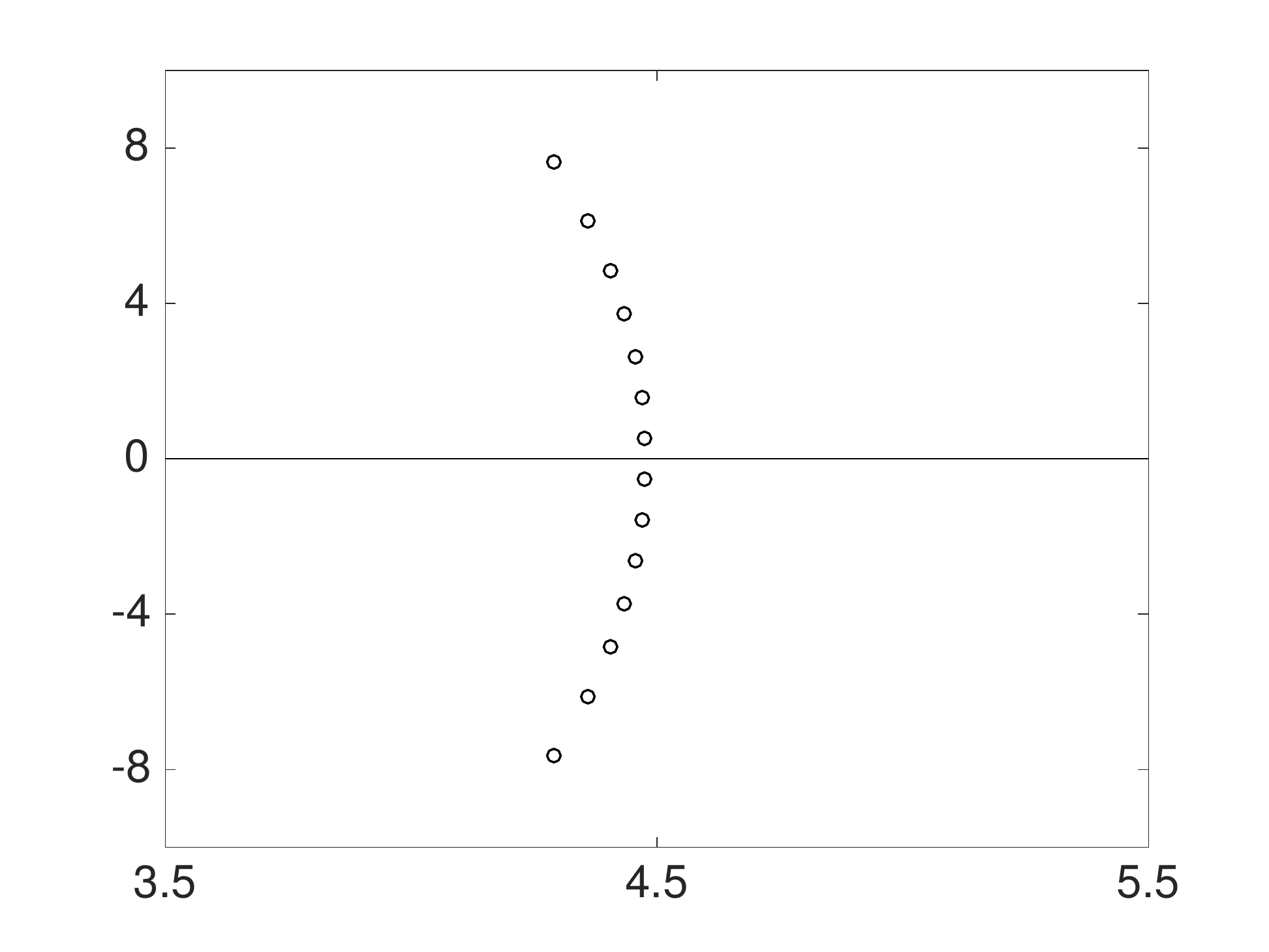}
\par\end{centering}
\centering{}\caption{\label{fig:gauss-nodes} Complex nodes $\xi_{m}$ in the representation
of $\tilde{g}(x)$ in Example~\ref{subsec:nak11}. Horizontal and
vertical axes correspond to $\mathcal{R}e\left(\xi_{m}\right)$ and
$\mathcal{I}m\left(\xi_{m}\right)$, respectively.}
\end{figure}

After obtaining an accurate approximation $f_{X}$ of $f^{\mathcal{N}}$,
using $\tilde{g}$ in the form (\ref{eq:form to maintain}), we compute
the PDF $p_{Y}$ of the random variable $Y=X^{2}$ using (\ref{eq:to compute})
and display it in Figure~\ref{fig:nak11x2}. The exact product PDF
is available analytically as $p\left(x\right)=4xK_{0}\left(2x\right)$,
where $K_{0}$ is modified Bessel function of the second kind. The
error,

\begin{equation}
\epsilon\left(x\right)=\log_{10}\left(\left|p_{Y}\left(10^{x}\right)-p(10^{x})\right|\right),\,\,\,-8\le x\le1,\label{eq:nak11x2-err}
\end{equation}
is displayed in Figure~\ref{fig:nak11x2-err}. Figure~\ref{fig:nak11x2-nodes}
shows the location of the complex nodes $\xi_{m}$ in the representation
of $p_{Y}$.

Using the computed PDF $p_{Y}$, we compute the PDF $p_{Z}$ of the
product of four Nakagami distributed random variables, $Z=Y^{2}=X^{4}$
(see Figure~\ref{fig:nak11x4}). Likewise, we compute the PDF $p_{W}$
of the product of eight Nakagami random variables and display the
result in Figure~\ref{fig:nak11x8}.

\begin{figure}[H]
\begin{centering}
\includegraphics[scale=0.3]{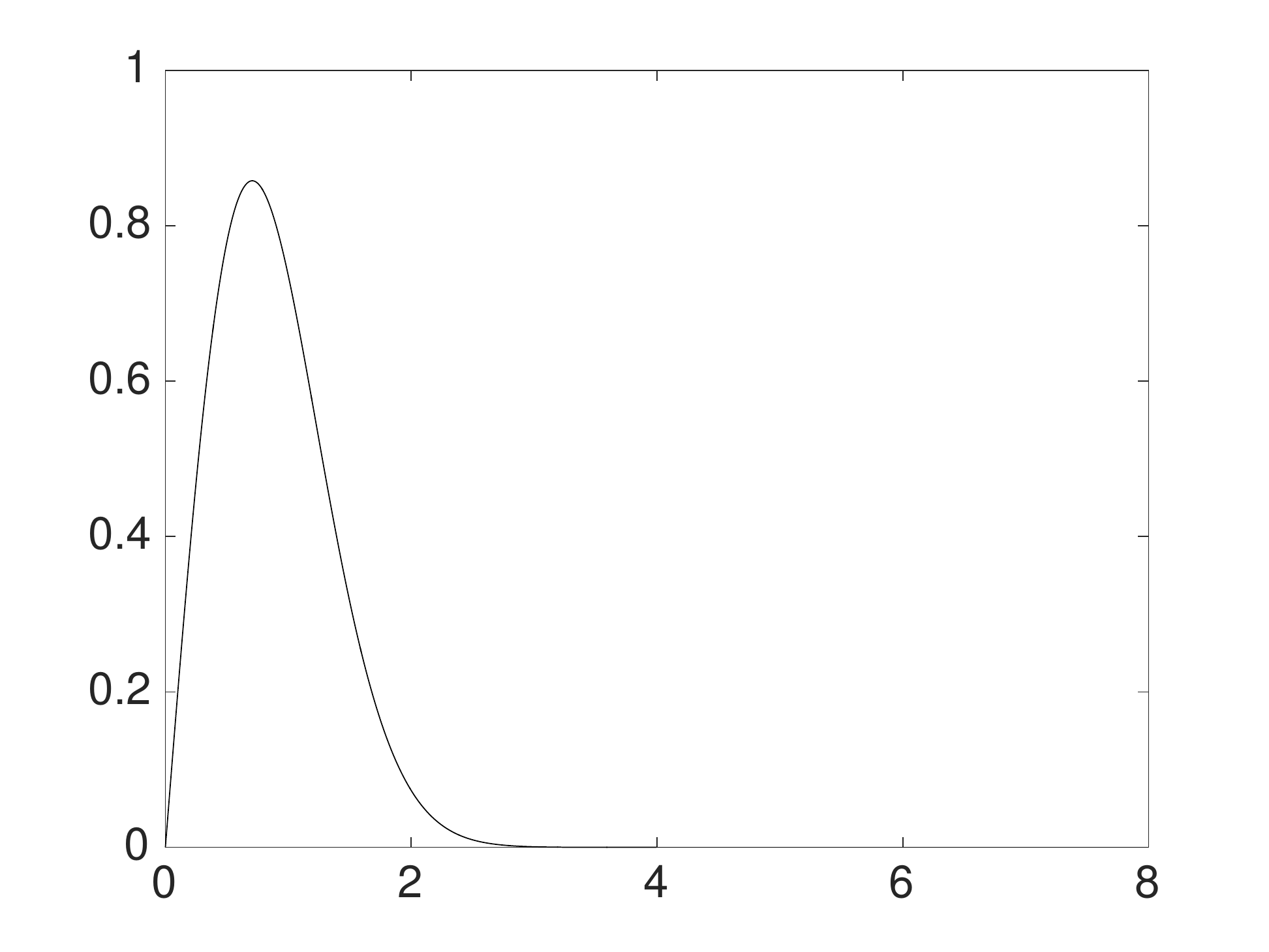}\includegraphics[scale=0.3]{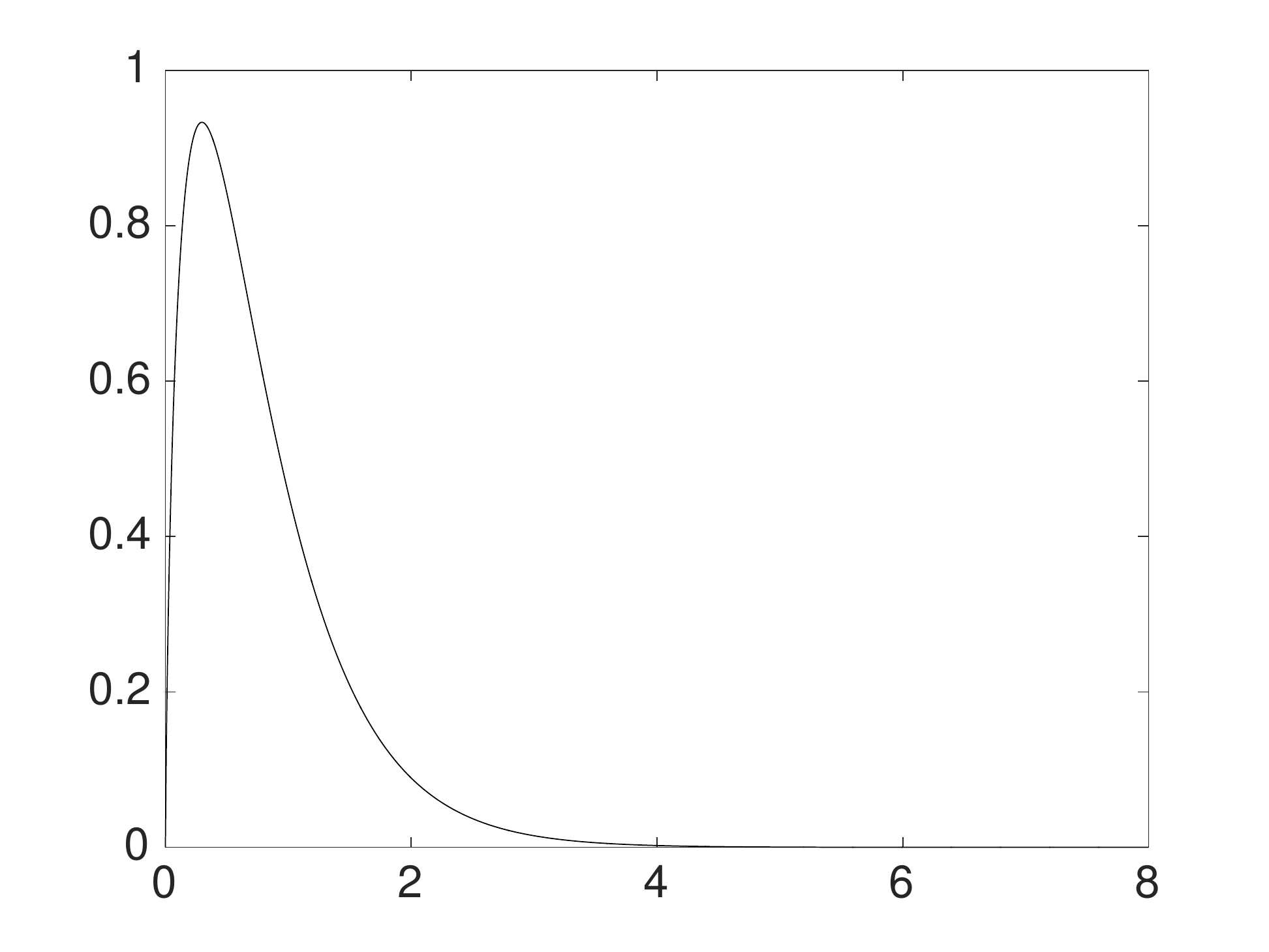}
\par\end{centering}
\centering{}\caption{\label{fig:nak11x2} PDF of the random variable $X$ (left) and PDF
of the product random variable $Y=X^{2}$ in Example~\ref{subsec:nak11}
(right).}
\end{figure}

\begin{figure}[H]
\begin{centering}
\includegraphics[scale=0.3]{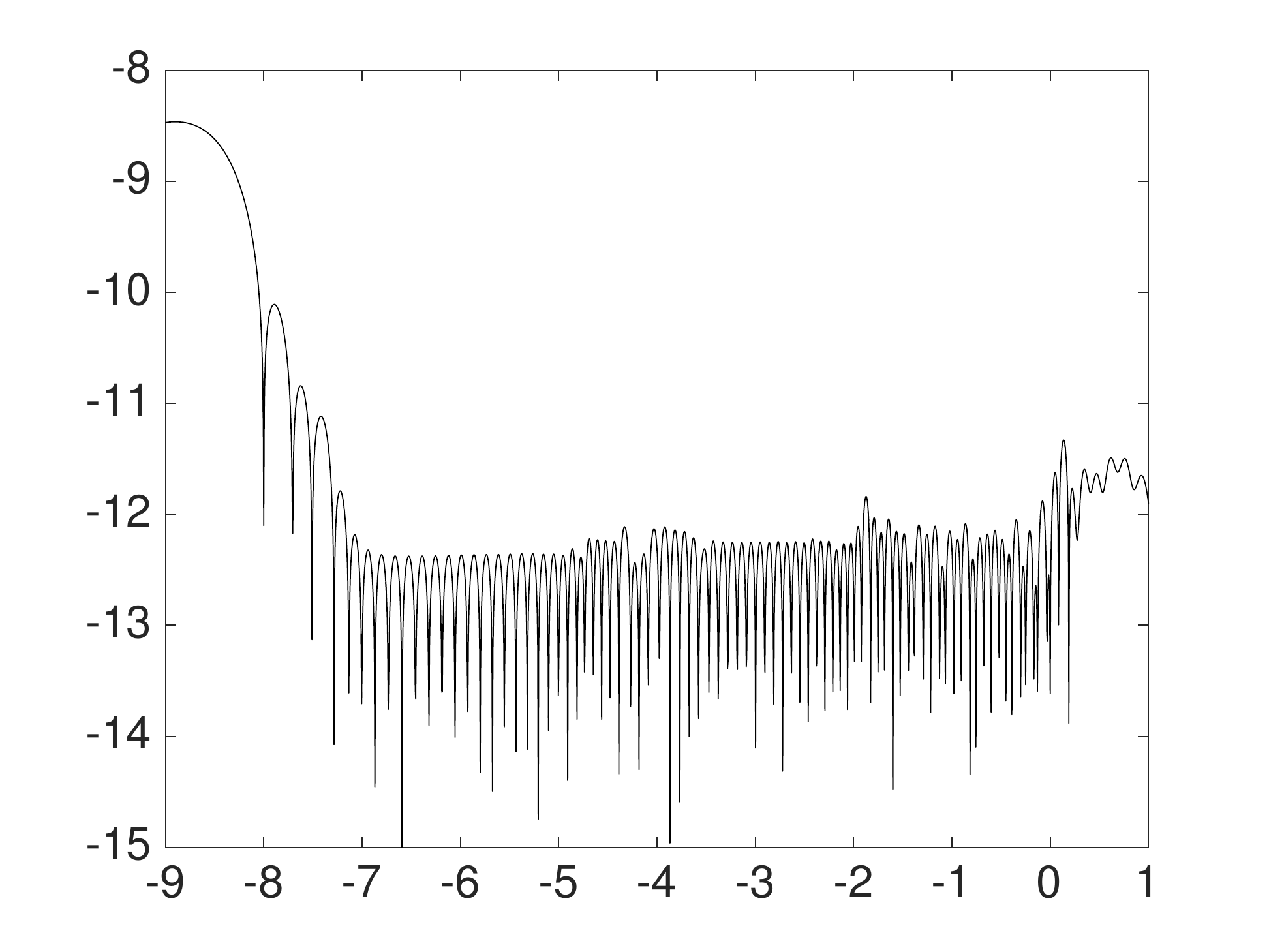}
\par\end{centering}
\centering{}\caption{\label{fig:nak11x2-err} Error curve $\epsilon\left(x\right)$ (see
(\ref{eq:nak11x2-err})) in Example~\ref{subsec:nak11}. Algorithm~\ref{alg:Computing-exponential-representation-II}
was applied four times to obtain this result (see Remark~\ref{rem: repeated approxiamtion}).}
\end{figure}

\begin{figure}[H]
\begin{centering}
\includegraphics[scale=0.3]{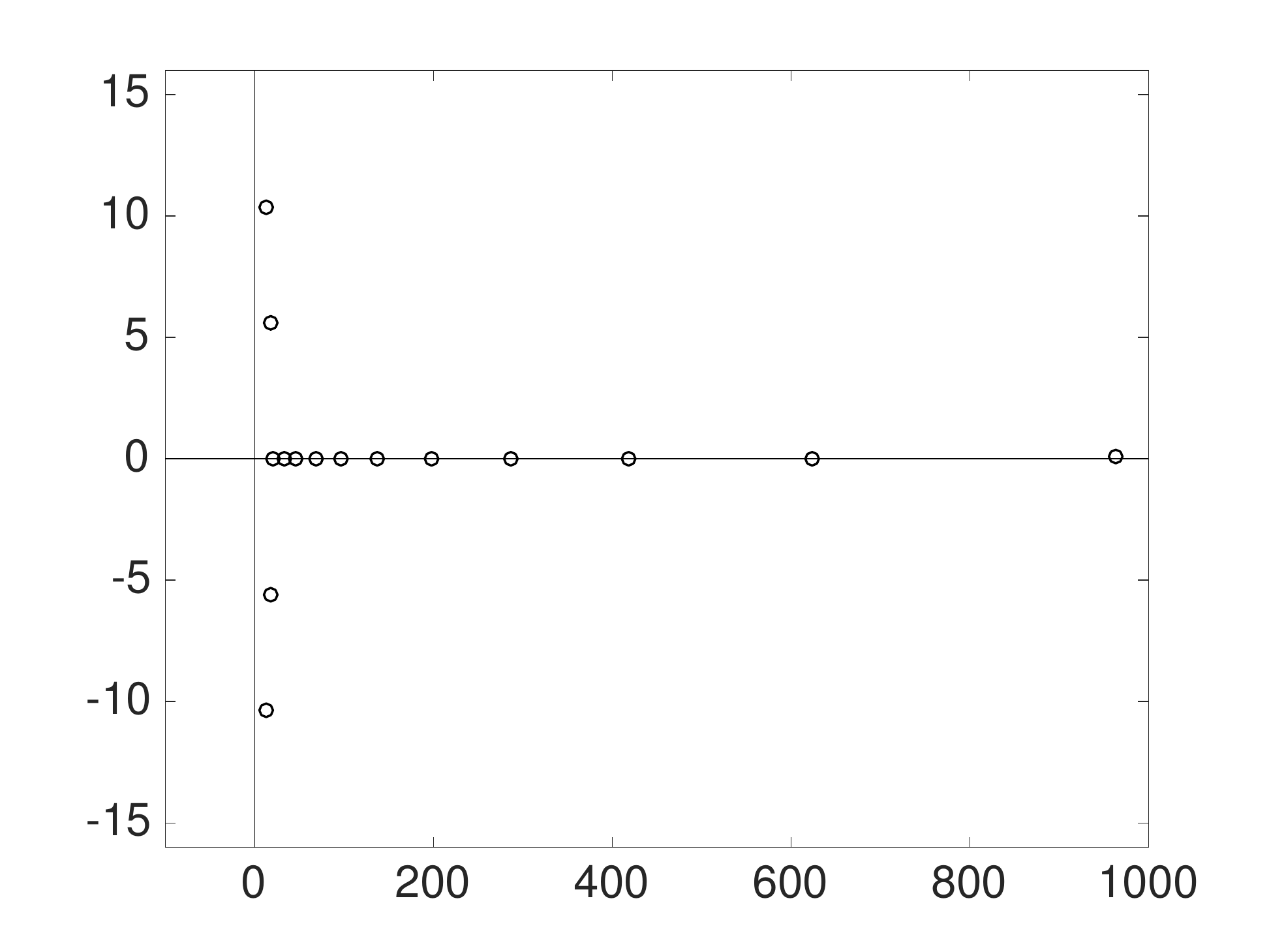}
\par\end{centering}
\centering{}\caption{\label{fig:nak11x2-nodes} Complex nodes $\xi_{m}$ in the representation
of the PDF $p_{Y}$ in Example~\ref{subsec:nak11}. Horizontal and
vertical axes correspond to $\mathcal{R}e\left(\xi_{m}\right)$ and
$\mathcal{I}m\left(\xi_{m}\right)$, respectively.}
\end{figure}

\begin{figure}[H]
\begin{centering}
\includegraphics[scale=0.3]{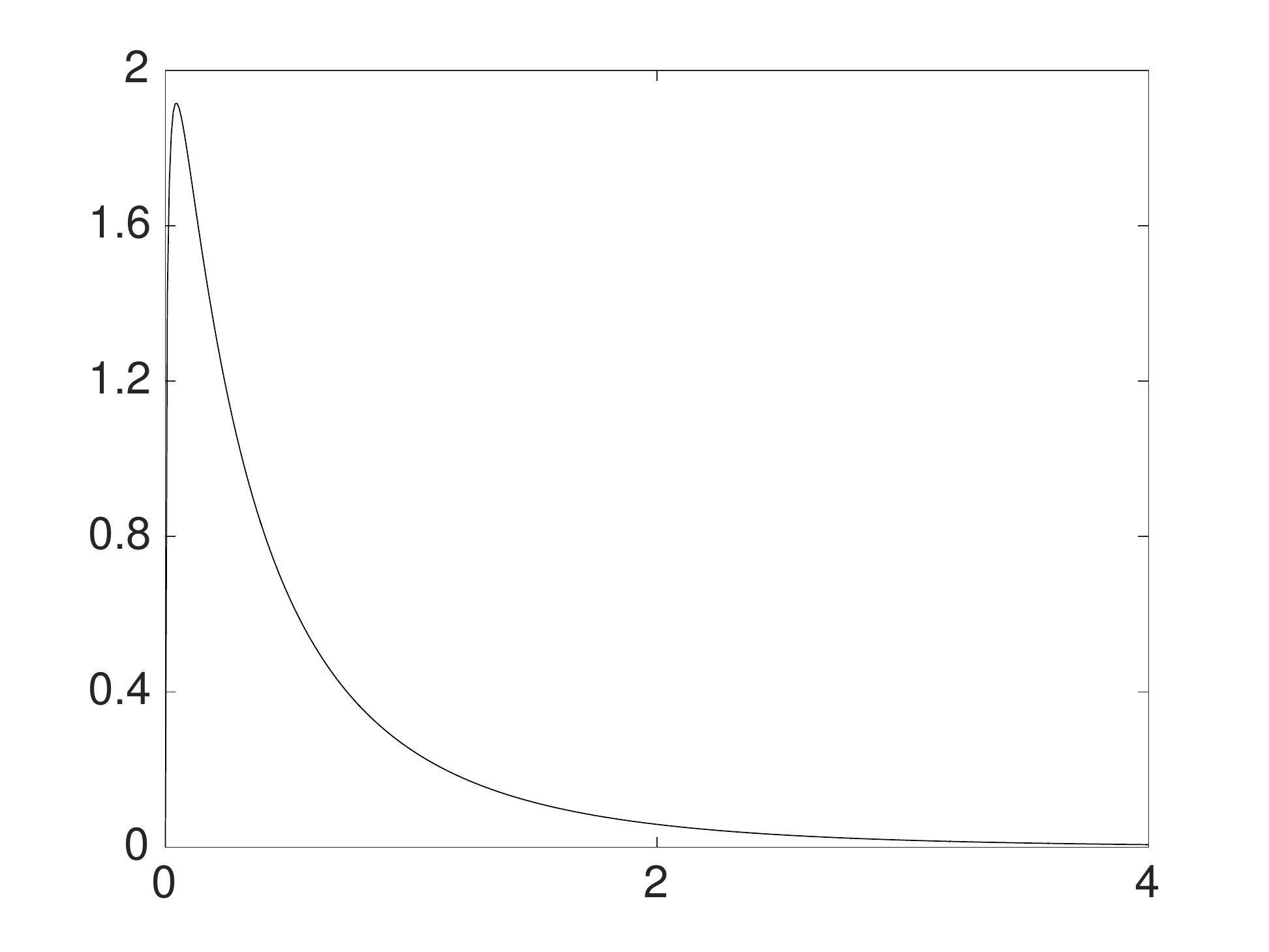}
\par\end{centering}
\centering{}\caption{\label{fig:nak11x4} PDF of the random variable $Z=X^{4}$ in Example~\ref{subsec:nak11}.}
\end{figure}

\begin{figure}[H]
\begin{centering}
\includegraphics[scale=0.3]{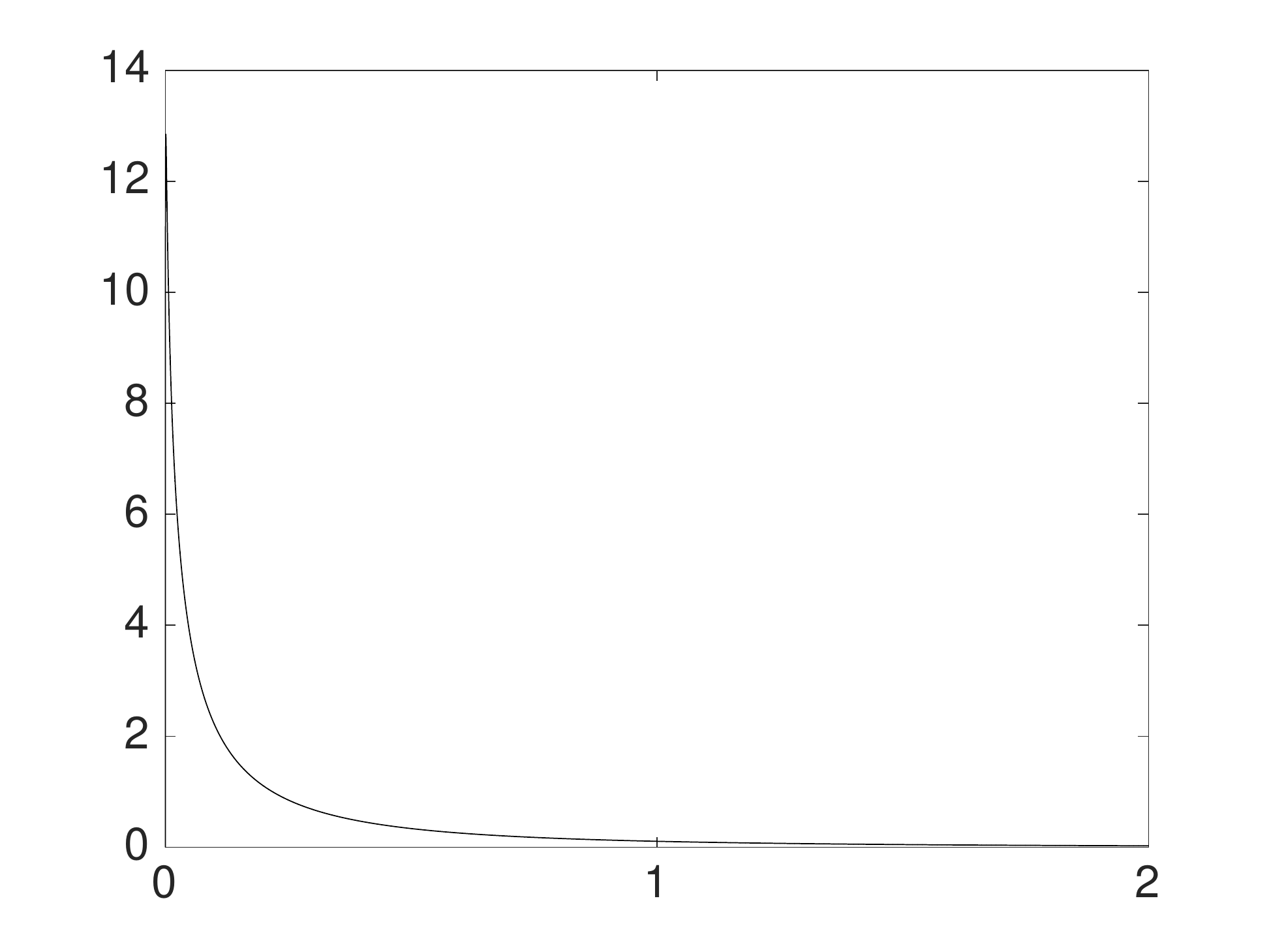}\includegraphics[scale=0.3]{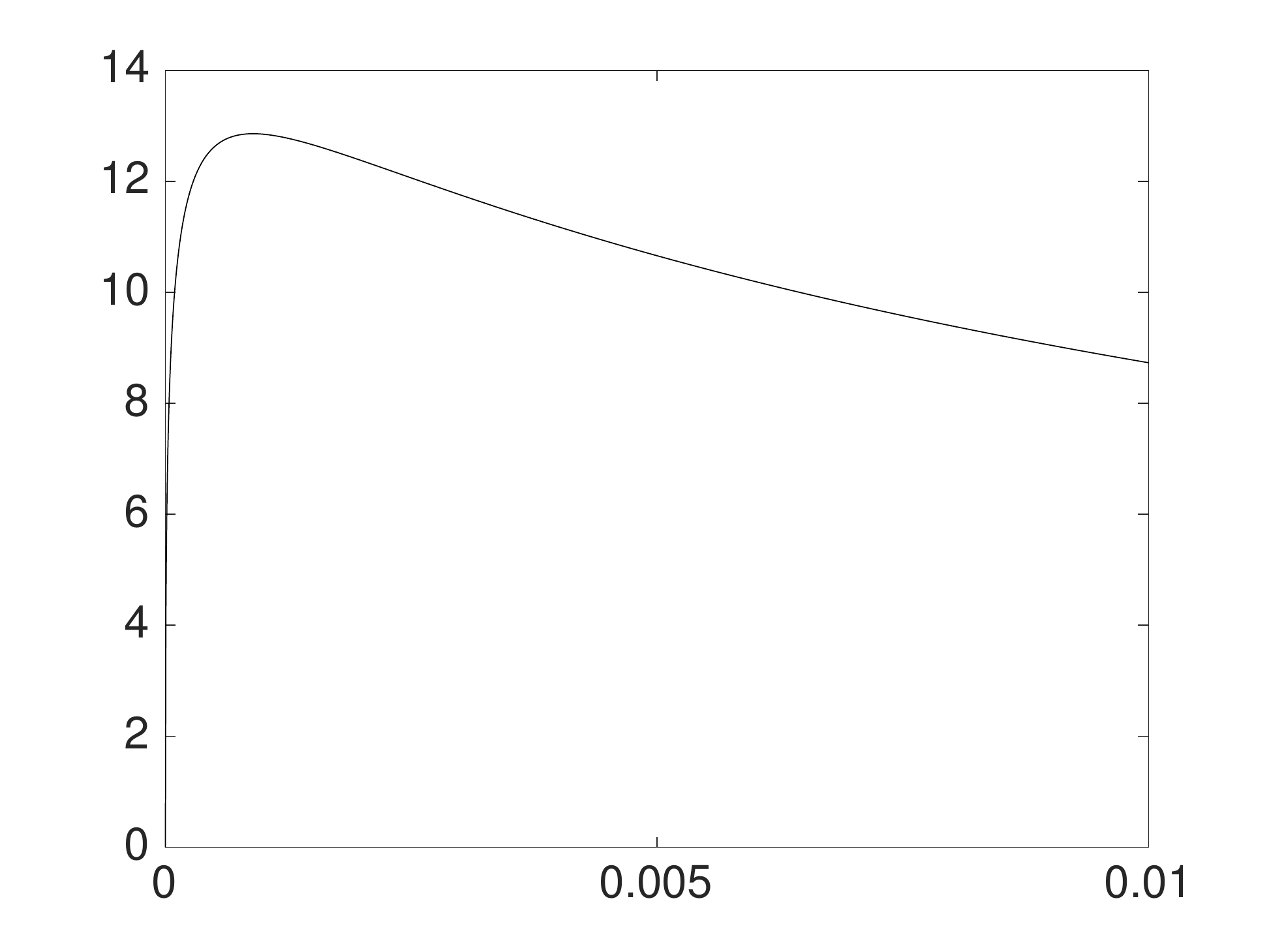}
\par\end{centering}
\centering{}\caption{\label{fig:nak11x8} PDF of the random variable $W=X^{8}$ on the
interval $x\in\left[0,2\right]$ (left) and on the interval $x\in\left[0,0.1\right]$
in Example~\ref{subsec:nak11} (right).}
\end{figure}

\subsubsection{\label{subsec:lom-gam}Product of Lomax and Gamma random variables}

As another example, we compute the PDF of the product of a Gamma distributed
random variable with PDF given by (\ref{eq:gamma-pdf}) and a Lomax
distributed random variable with PDF given by

\[
f^{\ell}\left(x;\alpha,\lambda\right)=\frac{\alpha}{\lambda}\left(1+\frac{x}{\lambda}\right)^{-\left(\alpha+1\right)},
\]
where $\alpha>0$ and $\lambda>0$ are called the shape and scale
parameters. In this example we use the Gamma distributed random variable
$X\sim f^{\gamma}\left(x;3,2\right)$ and the Lomax distributed random
variable $Y\sim f^{\ell}\left(y;5,2\right)$, and compute the PDF
$p_{Z}$ of the product $Z=XY$. We illustrate the PDFs of the random
variables $X$, $Y$, and $Z$ in Figure~\ref{fig:lom-gam}.

\begin{figure}[H]
\begin{centering}
\includegraphics[scale=0.3]{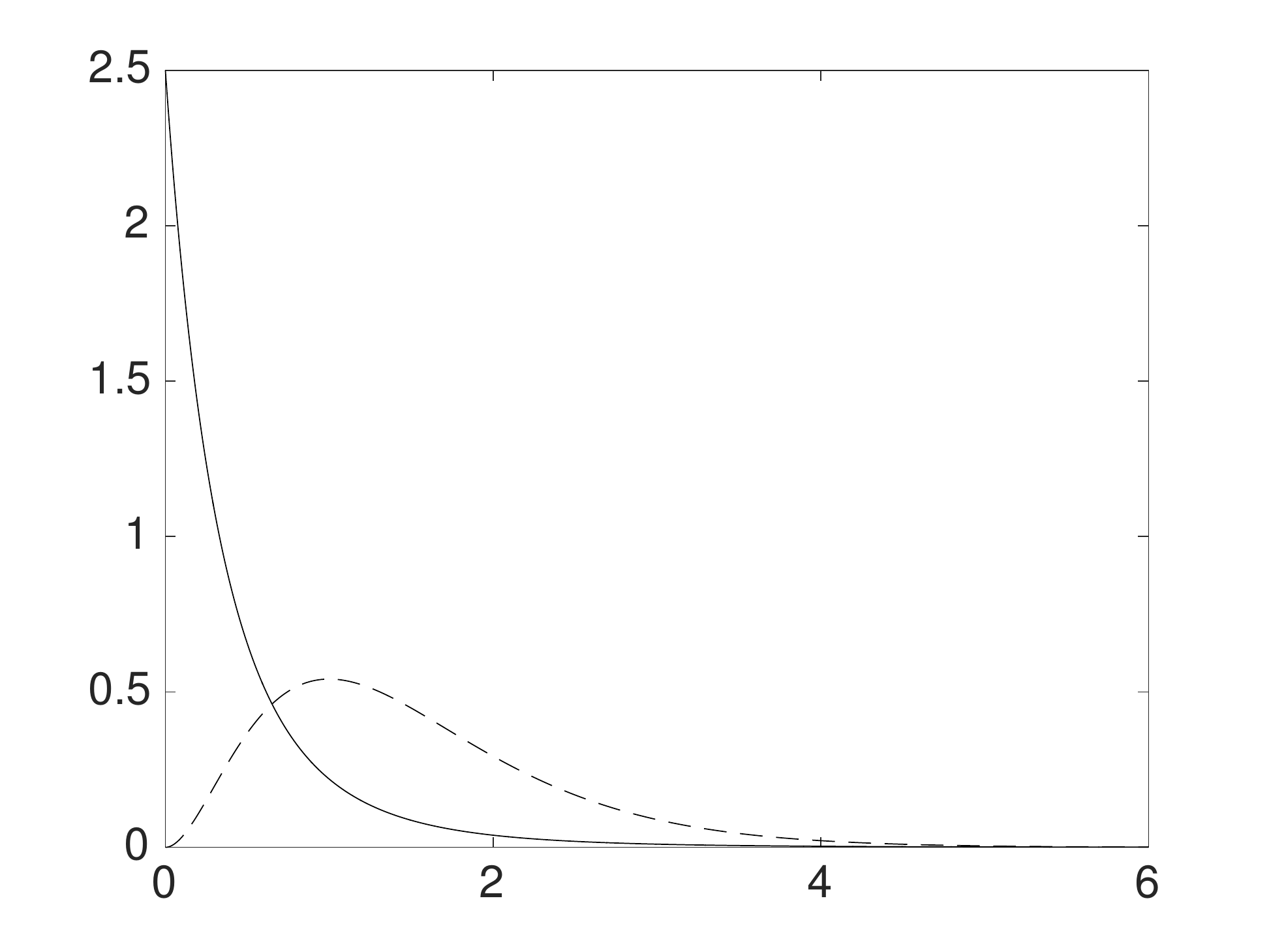}\includegraphics[scale=0.3]{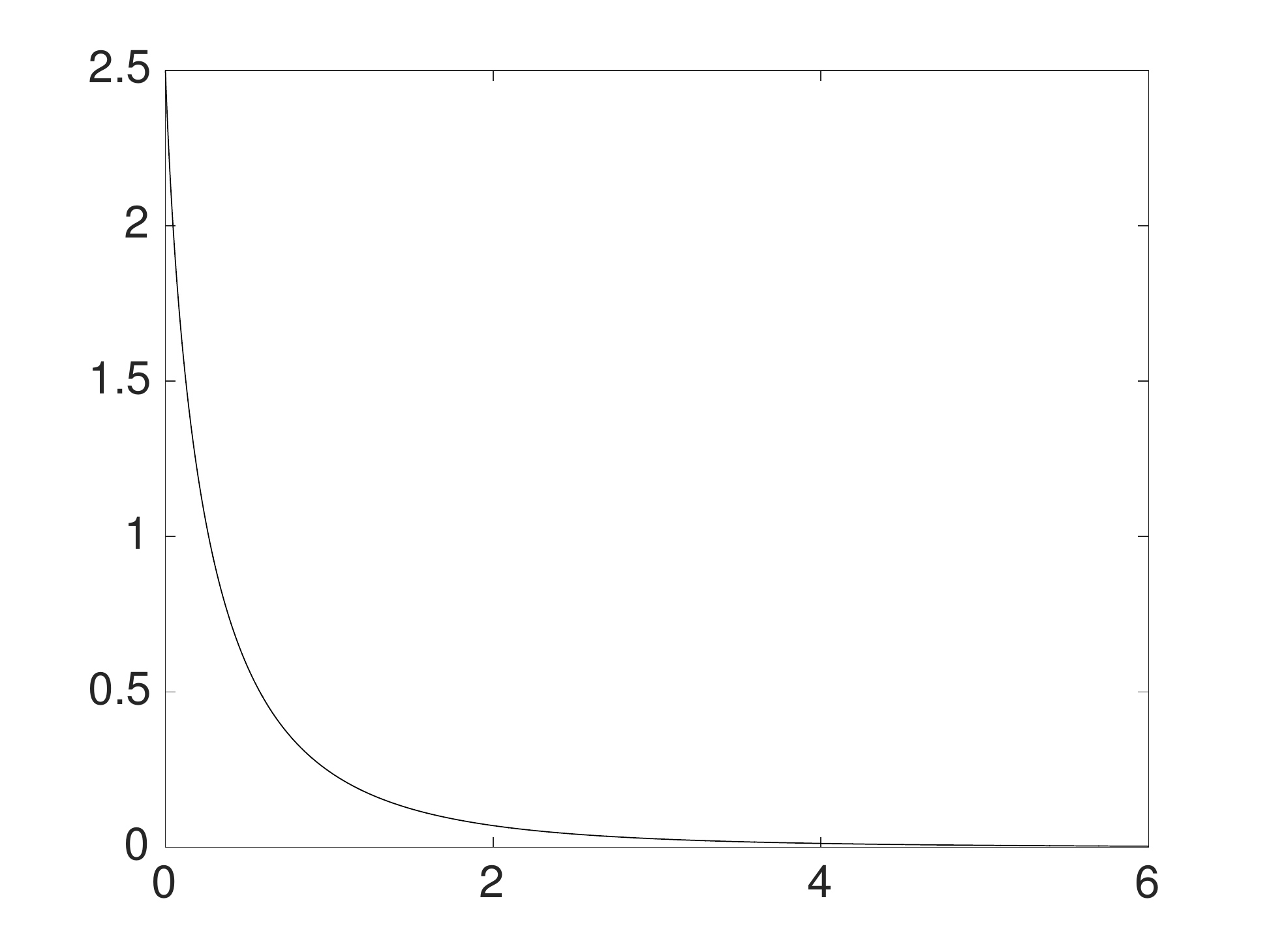}
\par\end{centering}
\centering{}\caption{\label{fig:lom-gam} PDFs of the random variables $X$ and $Y$ (left)
and the PDF of the product random variable $Z=XY$ in Example~\ref{subsec:lom-gam}
(right).}
\end{figure}

\subsubsection{\label{subsec:weibull}Product of Weibull and Nakagami random variables}

Next we consider the Weibull distributed random variable with PDF
given by 

\[
f^{w}\left(x;\lambda,k\right)=\frac{k}{\lambda}\left(\frac{x}{\lambda}\right)^{k-1}e^{-\left(\frac{x}{\lambda}\right)^{k}},\,\,\,x\geq0,
\]
where $k>0$ and $\lambda>0$ are called the shape and scale parameters.
We use a Weibull distributed random variable $X\sim f^{w}\left(x;1,1.5\right)$
and the random variable $Y$ obtained in Example~\ref{subsec:nak11}
(the product of two Nakagami random variables), and compute the PDF
$p_{Z}$ of the product $Z=XY$. We display the results in Figure~\ref{fig:weibull}.

\begin{figure}[H]
\begin{centering}
\includegraphics[scale=0.3]{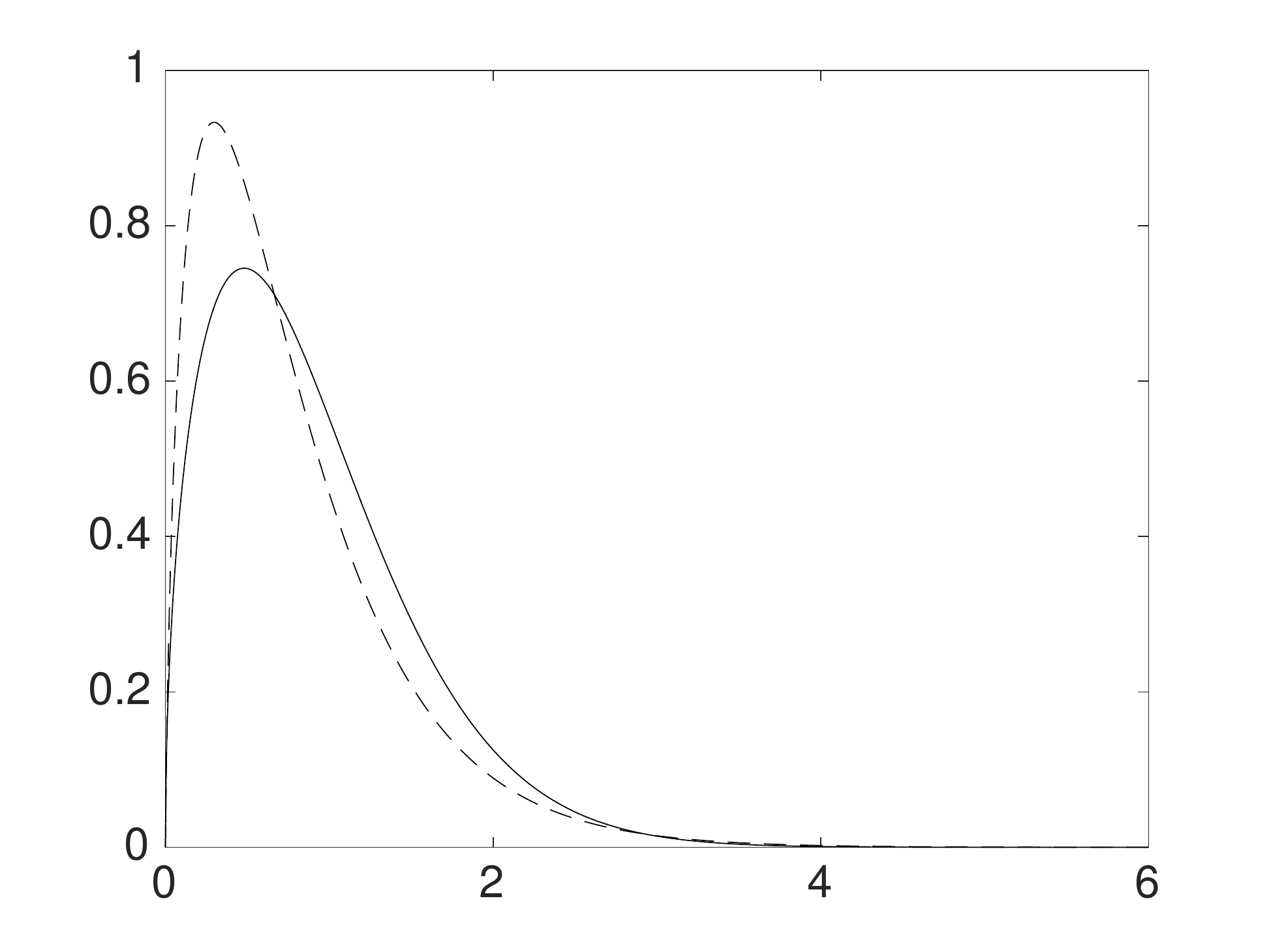}\includegraphics[scale=0.3]{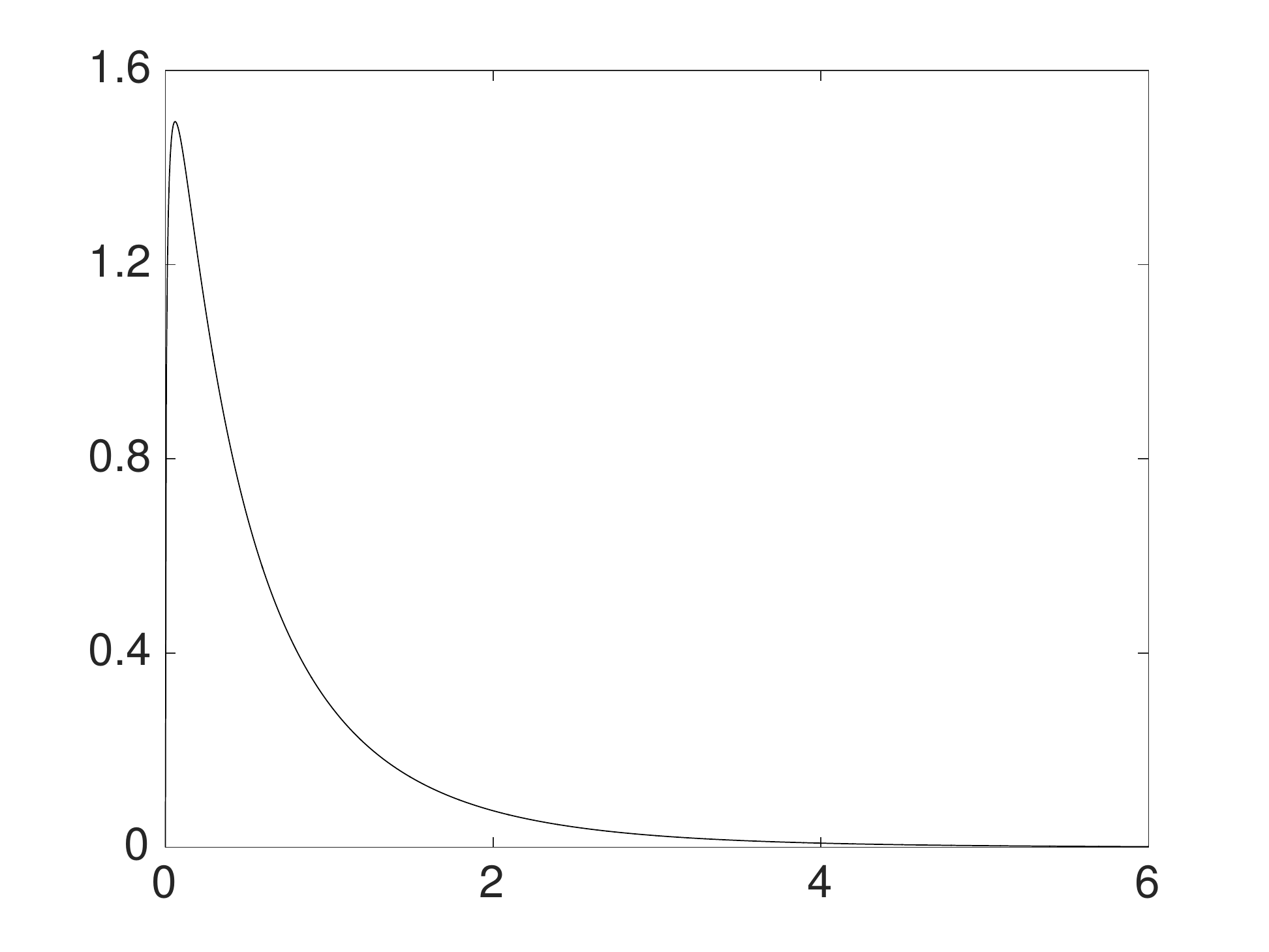}
\par\end{centering}
\centering{}\caption{\label{fig:weibull} PDFs of the random variables $X$ and $Y$ (left)
and the PDF of the product random variable $Z=XY$ in Example~\ref{subsec:weibull}
(right).}
\end{figure}

\subsubsection{\label{subsec:Quotient-of-Nakagami}Quotient of Nakagami and Gamma
random variables}

Finally, we compute the PDFs $p_{Z_{1}}$ and $p_{Z_{2}}$ of the
quotients $Z_{1}=X/Y$ and $Z_{2}=Y/X$ of Nakagami and Gamma random
variables $X$ and $Y.$ Given random variables $X\sim f^{\mathcal{N}}\left(x;1,1\right)$
as in Example~\ref{subsec:nak11} and $Y\sim f^{\gamma}\left(y,3,2\right)$
as in Example~\ref{subsec:gam205305}, we compute the PDFs of their
ratios and display the results in Figure~\ref{fig:ratio-nak-gam}.

\begin{figure}[H]
\begin{centering}
\includegraphics[scale=0.3]{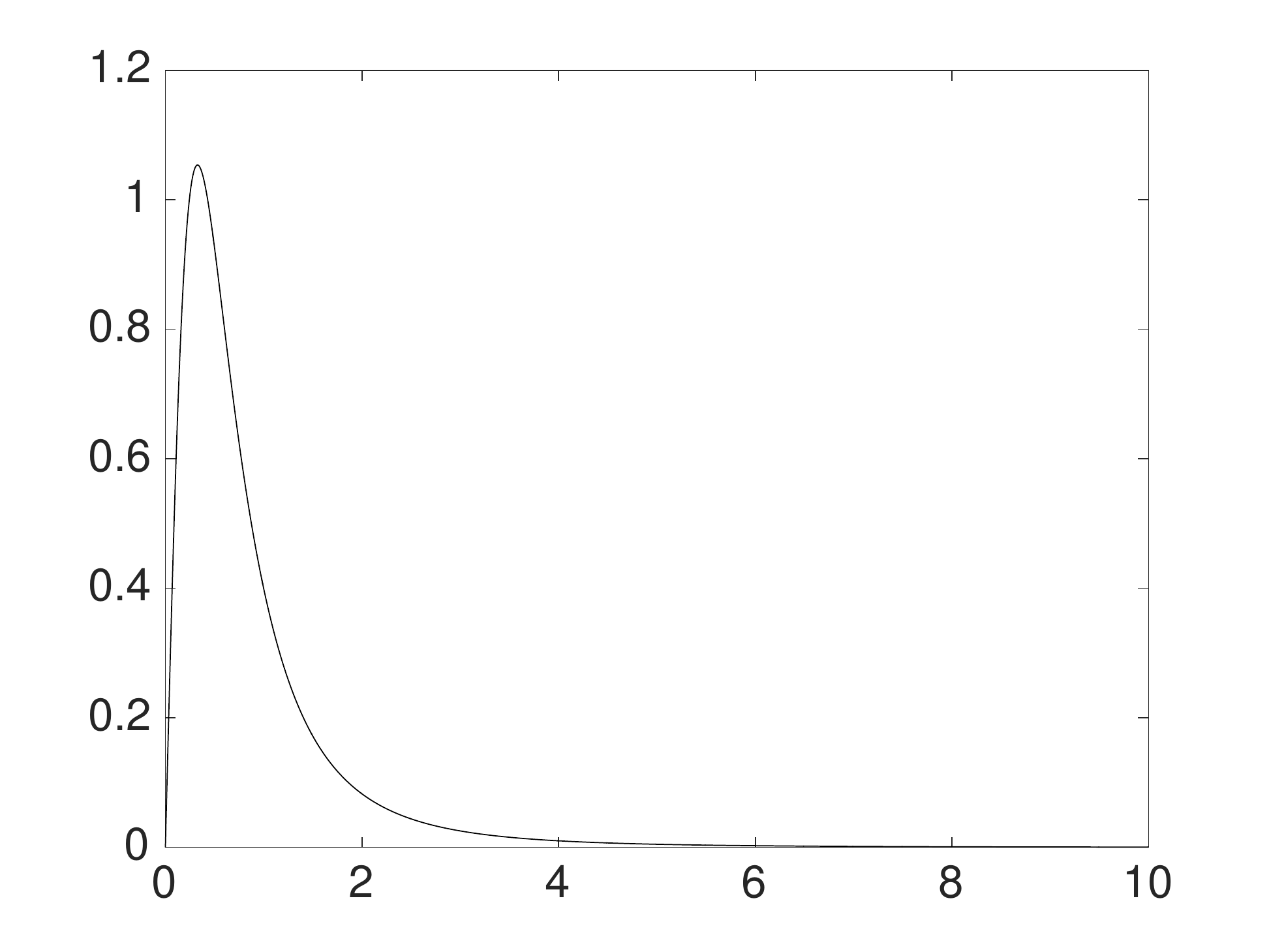}\includegraphics[scale=0.3]{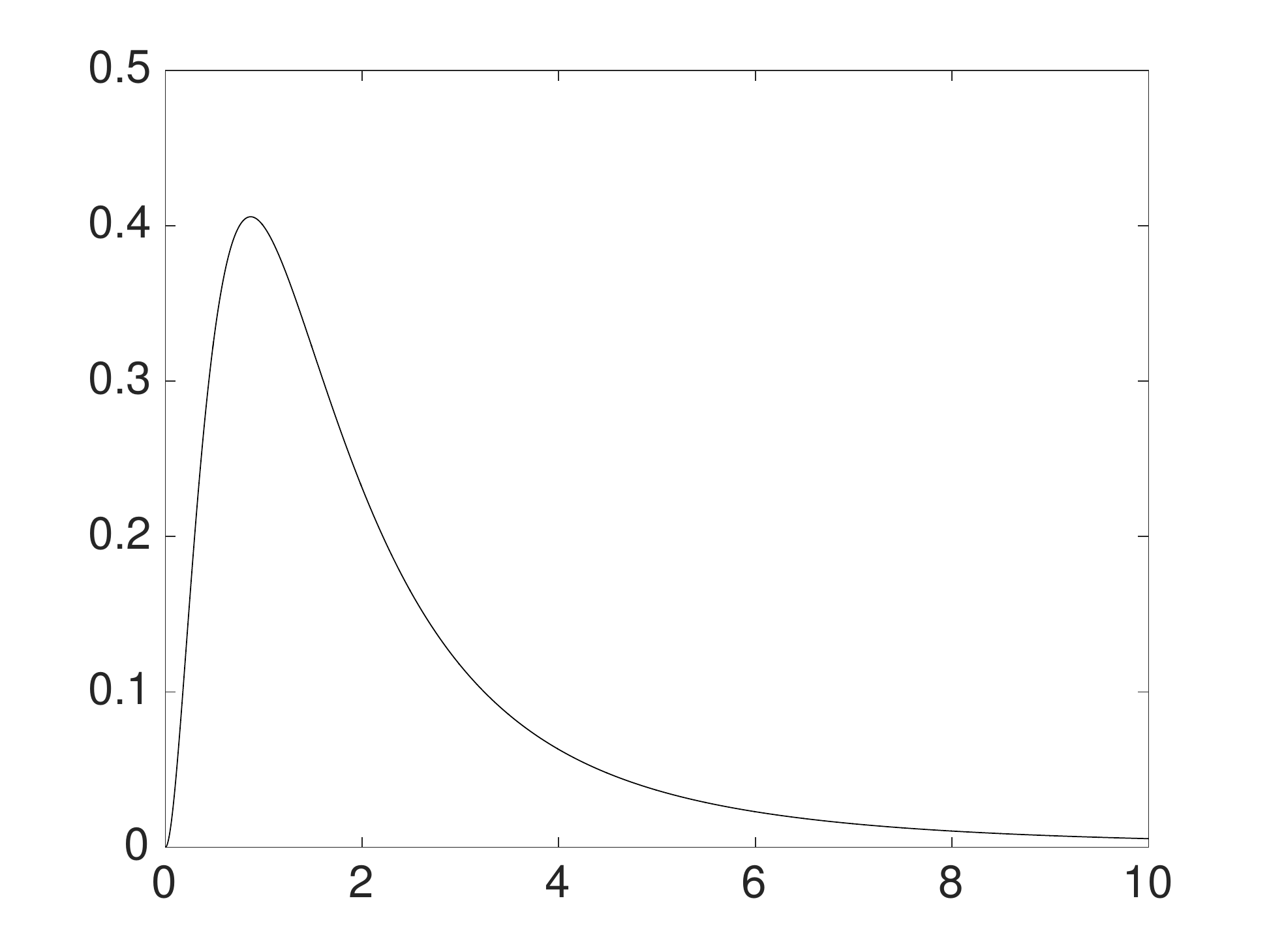}
\par\end{centering}
\centering{}\caption{\label{fig:ratio-nak-gam} PDFs of the random variables $Z_{1}=X/Y$
(left) and $Z_{2}=Y/X$ (right) in Example~\ref{subsec:Quotient-of-Nakagami}.}
\end{figure}

\subsubsection{Heavy-tailed distribution}

Our approach remains valid for heavy-tailed distributions. As an example,
let us consider a random variable $X$ with the standard Cauchy distribution,
\begin{equation}
f_{exact}\left(x\right)=\frac{1}{\pi}\frac{1}{1+x^{2}}\label{eq:Cauchy distribution}
\end{equation}
and compute the distribution for $\left|X\right|^{2}$. In this case
the integral defining the PDF of $\left|X\right|^{2}$ is evaluated
explicitly,
\begin{eqnarray}
p_{exact}\left(t\right) & = & \int_{\mathbb{R}}\int_{\mathbb{R}}f\left(x\right)f\left(y\right)\delta\left(t-\left|x\right|\left|y\right|\right)dxdy\nonumber \\
 & = & \frac{4}{\pi^{2}}\int_{0}^{\infty}\int_{0}^{\infty}\frac{1}{1+x^{2}}\frac{1}{1+y^{2}}\delta\left(t-xy\right)dxdy\nonumber \\
 & = & \frac{4}{\pi^{2}}\int_{0}^{\infty}\frac{1}{1+x^{2}}\frac{1}{t^{2}+x^{2}}xdx\nonumber \\
 & = & \frac{4}{\pi^{2}}\frac{\log\left(t\right)}{t^{2}-1},\label{eq:PDF of abs(X)^2}
\end{eqnarray}
which allows us to estimate the error of our numerical approach. We
start by approximating the Cauchy distribution (\ref{eq:Cauchy distribution})
via exponentials in the form (\ref{eq:form to maintain}). Using the
Laplace transform, we have
\begin{equation}
f_{exact}\left(x\right)=\frac{1}{\pi}\int_{0}^{\infty}e^{-x\tau}\sin\left(\tau\right)d\tau=\frac{1}{\pi}\int_{-\infty}^{\infty}e^{-xe^{s}+s}\sin\left(e^{s}\right)ds.\label{eq:integral for Cauchy distribution}
\end{equation}
Unfortunately, discretizing this integral directly via the trapezoidal
rule requires too many terms to achieve an accurate approximation
for small values of $x$. Therefore, we discretize (\ref{eq:integral for Cauchy distribution})
only to approximate the tail of (\ref{eq:Cauchy distribution}). We
obtain
\begin{equation}
\left|f_{exact}\left(x\right)-f_{tail}\left(x\right)\right|\le\epsilon,\,\,\,x\in\left[7.4748,\infty\right),\label{eq:Cauchy via exponentials}
\end{equation}
where
\[
f_{tail}\left(x\right)=h_{0}\sum_{j=M}^{N}e^{-xe^{s_{j}}+s_{j}}\sin\left(e^{s_{j}}\right)
\]
with $s_{j}=jh_{0}$, $h_{0}=0.25$, $M=-70$ and $N=5$, and achieve
accuracy $\epsilon\approx10^{-15}$. We then consider the difference
$f_{head}\left(x\right)=f_{exact}\left(x\right)-f_{tail}\left(x\right)$
in the interval $\left[0,7.4748\right]$ and use Algorithms~\ref{alg:Computing-exponential-representations-I}
or \ref{alg:Computing-exponential-representation-II} to obtain an
approximation for $f_{head}\left(x\right)$ with $15$ terms. Finally,
we remove all terms of $f_{head}\left(x\right)+f_{tail}\left(x\right)$
with weights less than $0.33\cdot10^{-12}$ leaving $73$ terms in
the resulting approximation of $f_{exact}\left(x\right)$. In Figure~\ref{fig:Log-Log-error-Cauchy}
we display the exponents and the error of the resulting approximation,
\begin{equation}
\mbox{err}_{f}\left(s\right)=\log_{10}\left(\left|f_{exact}\left(10^{s}\right)-f_{tail}\left(10^{s}\right)-f_{head}\left(10^{s}\right)\right|+10^{-20}\right),\,\,\,s\in\left[-15,6\right].\label{eq:error of approx Cauchy}
\end{equation}

\begin{figure}
\begin{centering}
\includegraphics[width=2in]{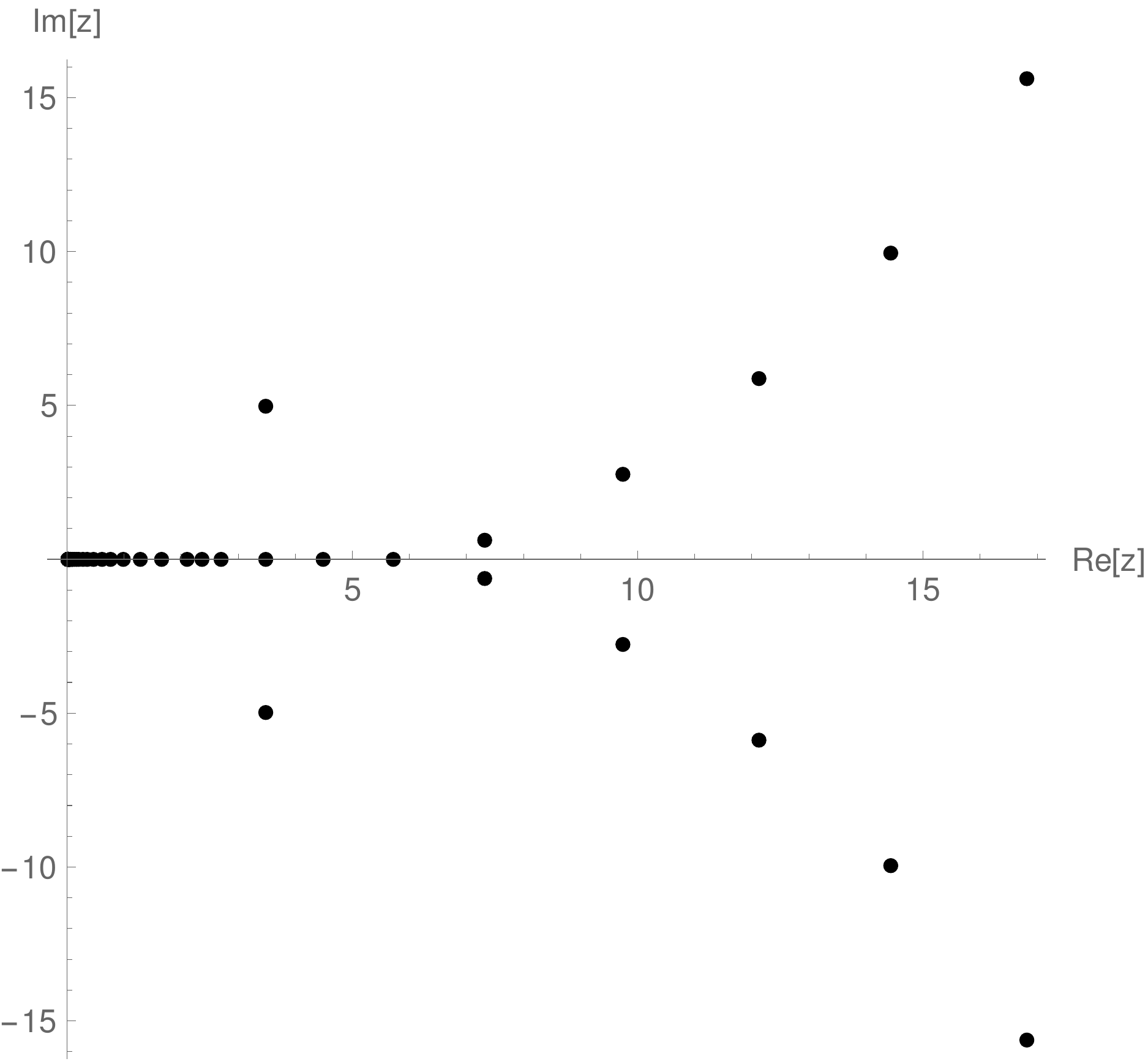}\includegraphics[width=2.5in]{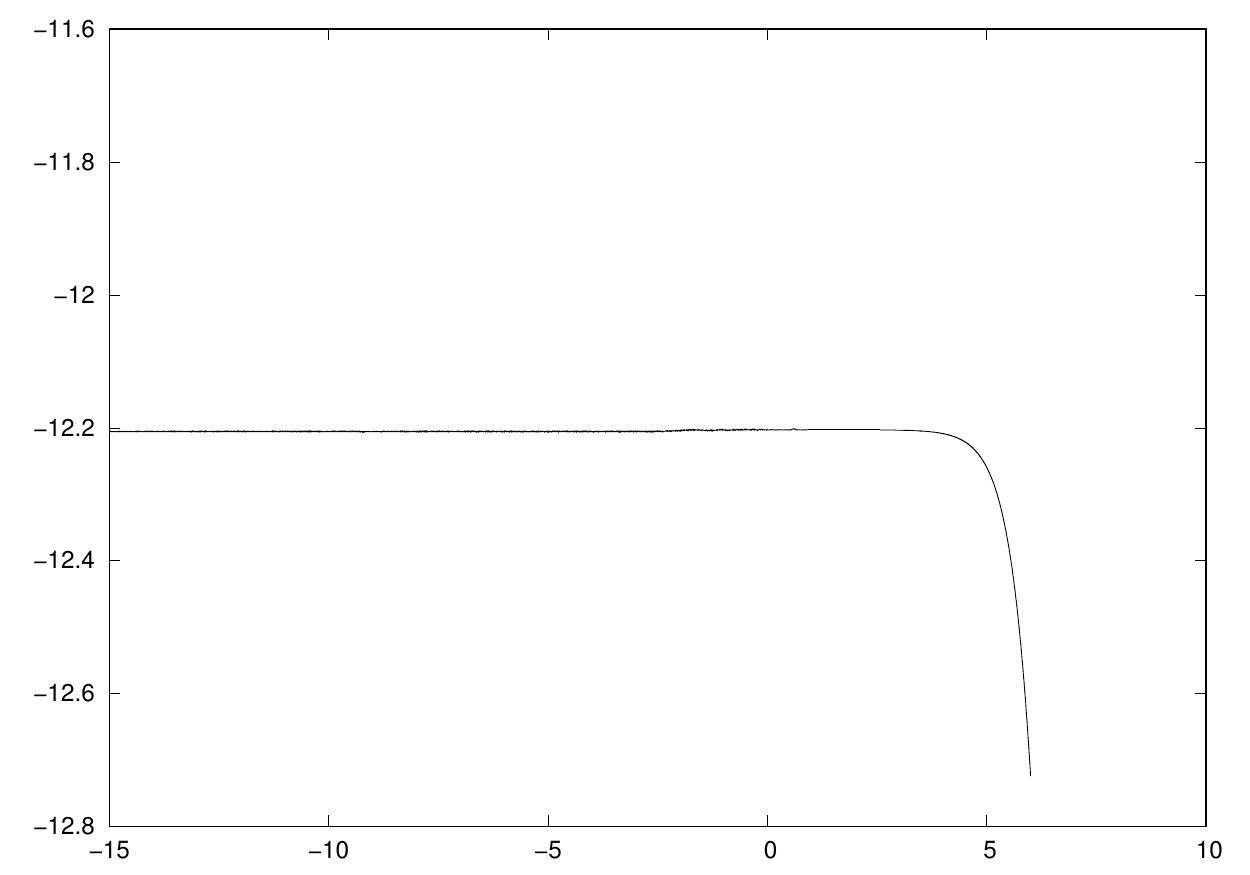}
\par\end{centering}
\caption{\label{fig:Log-Log-error-Cauchy}The $73$ exponents of the approximation
of the PDF (\ref{eq:Cauchy distribution}) (left) and log-log plot
of the resulting approximation error (\ref{eq:error of approx Cauchy})
(right). }
\end{figure}

To compute an approximation for the distribution of $\left|X\right|^{2}$,
we use $p\left(t\right)$ in Lemma~\ref{lem:2} where $\alpha=\beta=1$
and, thus,
\begin{equation}
p\left(t\right)=v\left(t\right)=2\sum_{m=1}^{M}\sum_{n=1}^{N}a_{m}b_{n}K_{0}\left(2\sqrt{t\xi_{m}\eta_{n}}\right).\label{eq:function_v for Cauchy}
\end{equation}
In order to find an exponential approximation for $p\left(t\right)$,
we first use $40,000$ equally spaced points with step size $h=2$
to discretize (\ref{eq:function_v for Cauchy}) and use Algorithms~\ref{alg:Computing-exponential-representations-I}
or \ref{alg:Computing-exponential-representation-II} to approximate
the ``tail'' of $p$. The resulting approximation with $30$ terms
is valid in the interval $\left[9.18,\infty\right)$. We then consider
the difference $p_{head}^{1}\left(t\right)=p\left(t\right)-\widetilde{p}_{tail}\left(t\right)$
in the interval $\left[10^{-4},9.18\right]$, discretize $p_{head}^{1}\left(t\right)$
in this interval using $100,000$ equally spaced points, and use one
of the mentioned algorithms to obtain an approximation for $p_{head}^{1}\left(t\right)$
with $40$ terms. We then consider $p_{head}^{2}\left(t\right)=p\left(t\right)-\widetilde{p}_{tail}\left(t\right)-p_{head}^{1}\left(t\right)$
in the interval $\left[10^{-7},0.5814\cdot10^{-3}\right]$, discretize
$p_{head}^{2}\left(t\right)$ in this interval using $50,000$ equally
spaced points, and again use Algorithms~\ref{alg:Computing-exponential-representations-I}
or \ref{alg:Computing-exponential-representation-II} to obtain an
approximation for $p_{head}^{2}\left(t\right)$ with $25$ terms.
In Figure~\ref{fig:Log-Log-error-heavy  tail} we display the error
of the final approximation with $95$ terms,
\begin{equation}
\mbox{err}_{p}\left(y\right)=\log_{10}\left(\left|p_{exact}\left(10^{y}\right)-\widetilde{p}_{tail}\left(10^{y}\right)-p_{head}^{1}\left(10^{y}\right)-p_{head}^{2}\left(10^{y}\right)\right|+10^{-20}\right),\label{eq:error of approx product}
\end{equation}
where $y\in\left[-7,8\right]$. We note that in this example we were
not seeking an optimal representation of the form (\ref{eq:form to maintain})
of the distribution for $\left|X\right|^{2}$.

\begin{figure}
\begin{centering}
\includegraphics[width=3.5in]{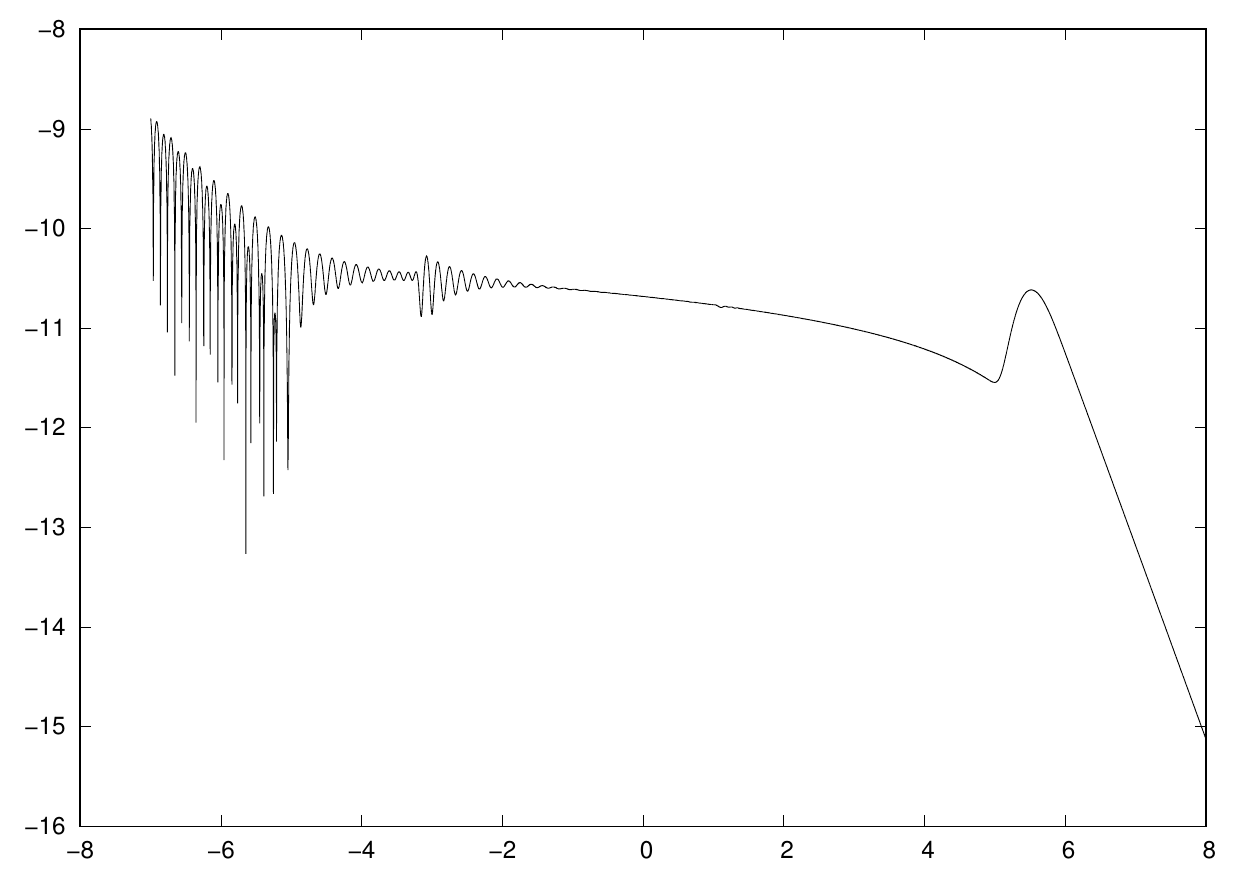}
\par\end{centering}
\caption{\label{fig:Log-Log-error-heavy  tail}Log-log plot of the error (\ref{eq:error of approx product})
of approximating the PDF (\ref{eq:PDF of abs(X)^2}) with $95$ terms.}
\end{figure}

\section{\label{sec:Computing-expectations-of}Computing expectations of functions
of random variables}

An important use of representing the PDF $p_{Z}$ of a non-negative
random variable in the proposed functional form is to compute, for
a function $u$, the expectation of the variable $u\left(Z\right)$,
\begin{equation}
\mbox{E}\left[u\left(Z\right)\right]=\int_{0}^{\infty}u\left(x\right)p_{Z}\left(x\right)dx.\label{eq:expectation integral}
\end{equation}
If the function $u$ is given analytically, i.e. we can evaluate it
at any point, the fact that we have a functional representation of
$p_{Z}$ allows us to use an appropriate quadrature to evaluate this
integral to any desired accuracy. Moreover, if the function $u$ admits
a representation via exponentials (which can be computed via Algorithms~\ref{alg:Computing-exponential-representations-I}
or \ref{alg:Computing-exponential-representation-II}), the expectation
\eqref{eq:expectation integral} can be evaluated explicitly. If only
samples of the function $u$ are provided, then we can treat $p_{Z}$
as a weight and construct a quadrature with nodes at locations where
the values of $u$ are available. If the function $u$ is a monomial,
i.e. when computing the moments of random variable $Z$, we can use
the explicit integral

\[
\int_{0}^{\infty}x^{\alpha-1}e^{-\eta x}dx=\eta^{-\alpha}\Gamma\left(\alpha\right),\ \ \mathcal{R}e\left(\eta\right)>0,\ \ \alpha>0.
\]
For example, given the PDF of a random variable $Z$ in the form \eqref{eq:form to maintain},
we compute its first moment $m_{1}$ as 

\[
m_{1}=\Gamma\left(\alpha+1\right)\sum_{l=1}^{M}a_{l}\xi_{l}^{-\left(\alpha+1\right)},
\]
and its second moment $m_{2}$ as

\[
m_{2}=\Gamma\left(\alpha+2\right)\sum_{l=1}^{M}a_{l}\xi_{l}^{-\left(\alpha+2\right)}-m_{1}.
\]
We note that while our algorithms do not guarantee that the moments
are preserved exactly, the accuracy of the resulting moments is controlled
by the overall accuracy of the approximation. In particular, we can
always enforce $\int_{0}^{\infty}p_{Z}\left(x\right)dx=1$ by imposing
an additional linear constrain on the coefficients $a_{l}$ of the
exponential approximation.

\section{\label{sec:Conclusions-and-further}Conclusions and further work }

For any user-selected accuracy, we have developed an approximate representation
of non-negative random variables \eqref{eq:form to maintain} that
allow us to compute the PDFs of their sums, products and quotients
in the same functional form. The monomial factor in the functional
form in \eqref{eq:form to maintain} is chosen to accommodate a possible
rapid change of the PDFs of non-negative random variables near zero. 

We demonstrated accuracy and efficiency of the resulting numerical
calculus of PDFs on several numerical examples. In order to account
for the boundary at zero, we use a different representation of the
PDFs of non-negative random variables than our previous construction
for random variables defined on the whole real line \cite{BE-MO-SA:2017}.
Clearly, Gaussian mixtures used in \cite{BE-MO-SA:2017} do not have
support restricted to the positive real axis and, thus, could not
yield an efficient representation. 

While there are clear advantages to our new approach for computing
PDFs in comparison with Monte Carlo-type methods, we do not compare
the two in this paper. We plan to address such comparison elsewhere
in the context of practical applications. 

\section*{Acknowledgments}

We would like to thank the anonymous reviewers for their useful comments
and Dr. Luis Tenorio (Colorado School of Mines) for his valuable suggestions.

\bibliographystyle{plain}

\end{document}